\numberwithin{equation}{section}
\theoremstyle{plain}
\newtheorem{theorem}{Theorem}[section]
\newtheorem{lemma}[theorem]{Lemma}
\theoremstyle{definition}
\newtheorem{definition}[theorem]{Definition}
\newtheorem{assumption}[theorem]{Assumption}
\theoremstyle{remark}
\newtheorem{remark}[theorem]{Remark}
\newtheorem{notation}{Notation}[section]
\def\dashint{\operatorname%
{\,\,\text{\bf--}\kern-.98em\DOTSI\intop\ilimits@\!\!}}
\def\bR{\mathbb{R}}
\def\cB{\mathcal{B}}
\def\cL{\mathcal{L}}
\def\cR{\mathcal{R}}
\begin{document}
\title[Stokes system]{Green functions for stationary Stokes systems with conormal derivative boundary condition in two dimensions}

\author[J. Choi]{Jongkeun Choi}
\address[J. Choi]{
Department of Mathematics Education, Pusan National University,  Busan, 46241, Republic of Korea}

\email{jongkeun\_choi@pusan.ac.kr}

\thanks{J. Choi was supported by the National Research Foundation of Korea (NRF) under agreement NRF-2022R1F1A1074461}

\author[D. Kim]{Doyoon Kim}
\address[D. Kim]{Department of Mathematics, Korea University, 145 Anam-ro, Seongbuk-gu, Seoul, 02841, Republic of Korea}
\email{doyoon\_kim@korea.ac.kr}

\thanks{D. Kim was supported by the National Research Foundation of Korea (NRF) funded by the Korea government (MSIT) (2019R1A2C1084683).}

\subjclass[2010]{35J08, 35J57,  76D07}
\keywords{Green function, Stokes system, conormal derivative problem, measurable coefficients, Reifenberg flat domain}

\begin{abstract}
We construct Green functions of conormal derivative problems for the stationary Stokes system with measurable coefficients in a two dimensional Reifenberg flat domain. 
\end{abstract}

\maketitle

\section{Introduction}	

We present Green functions for the stationary Stokes system with measurable coefficients and the  {\em{conormal}} derivative boundary condition in two dimensional domains, along with a unified approach for constructing Green functions for both elliptic and Stokes systems with various boundary conditions.

Let $\Omega$ be a bounded domain in $\bR^2$.
We consider an elliptic operator $\cL$ and its associated conormal derivative operator $\cB$ of the form
$$
\cL u=D_\alpha (A_{\alpha\beta}D_\beta u), \quad \cB u=\nu^\alpha A_{\alpha\beta} D_\beta  u
$$
acting on column vector-valued functions  $u=(u^1, u^2)^{\top}$, where $\nu=(\nu^1, \nu^2)^\top$ is the outward unit normal to $\partial \Omega$.
We use the Einstein summation convention on repeated indices.
By a Green function of the operator $\cL$ in $\Omega$, we mean a pair $(G, \Pi)=(G(x,y), \Pi(x,y))$ satisfying 
$$
\begin{cases}
\operatorname{div} G(\cdot,y)=0 &\quad \text{in }\, \Omega,\\
\cL G(\cdot,y)+\nabla \Pi(\cdot,y)=-\delta_yI+\frac{1}{|\Omega|}I&\quad \text{in }\, \Omega,\\
\cB G(\cdot, y)+\nu\Pi(\cdot,y) =0& \quad \text{on }\, \partial \Omega.
\end{cases}
$$
See Section \ref{sect_2_2} for a precise definition of the Green function.
As is well known, 
 if  the Green function $(G, \Pi)$ of $\cL$ exists and $(u, p)$ is a weak solution of the adjoint problem
\begin{equation}		\label{210827@eq1}
\begin{cases}
\operatorname{div} u=g &\quad \text{in }\, \Omega,\\
\cL^\star u+\nabla p=D_\alpha f_\alpha &\quad  \text{in }\, \Omega,\\
\cB^\star u+p \nu=\nu^\alpha f_\alpha  & \quad \text{on }\, \partial \Omega
\end{cases}
\end{equation}
with bounded data, then the flow velocity $u$ has the following representation
$$
u(y)=\int_\Omega D_\alpha G(x,y)^{\top} f_\alpha(x)\,dx+\int_\Omega \Pi(x,y)^\top g(x)\,dx.
$$
The conormal problem  \eqref{210827@eq1}  arises from the variational principle and have been used, for example, to describe a channel flow model in which the output velocity dependence is a prior unknown; see  \cite{MR3458302, MR1972357} and references therein.

In this paper, we prove that the  Green function $(G, \Pi)$ of $\cL$ exists and has the logarithmic pointwise bound 
$$
|G(x,y)|\le N\log \bigg(\frac{\operatorname{diam}(\Omega)}{|x-y|}\bigg)+N
$$
if $\Omega$ is a bounded Reifenberg flat domain in $\bR^2$; see Theorem \ref{M1}.
Note that Lipschitz domains with small Lipschitz constants are Reifenberg flat.
Thus, our main theorem holds for all Lipschitz domains with small Lipschitz constants.
In fact, in this case, since our argument is irrelevant to the flatness of the Lipschitz boundary, 
the theorem still holds for all Lipschitz domains with a bounded Lipschitz constant; see Remark \ref{rem0712_1}.
The Green function also satisfies the  pointwise bound
$$
|D_xG(x,y)|+|\Pi(x,y)| \le \frac{N}{|x-y|}
$$
away from $\partial \Omega$ if we further assume  that the coefficients are in the class of DMO (Dini mean oscillations); see Remark \ref{210819@rmk2}.
As far as the existence of the Green function is concerned, the coefficients $A_{\alpha\beta}$ need only be measurable.
Stokes systems with such  irregular coefficients can be used to describe the motion of inhomogeneous fluids with density-dependent viscosity and Stokes flow over composite materials with closely spaced interfacial boundaries. 
See, for instance,  \cite{MR2663713,MR4440019,  MR3758532, MR0425391, MR1422251, MN1987,MR3078336}.

The current paper can be viewed as a continuation of \cite{MR3877495}, where the corresponding three or higher dimensional Green functions are studied.
With  Dong, we proved in \cite{MR3877495} that when the dimension $d\ge 3$, the Green function $(G, \Pi)$ of $\cL$ exists and has pointwise bound 
$$
|G(x,y)|\le N|x-y|^{2-d}
$$
 if coefficients are in the class of BMO (bounded mean oscillations).
See \cite{MR3693868, MR3670039} on the corresponding results for Green functions of Dirichlet problems and fundamental solutions.
The proof in \cite{MR3877495} as well as \cite{MR3693868, MR3670039} relies on utilizing $W^1_2$-solvability with the Sobolev embedding 
\begin{equation}		\label{220903@eq1}
W^1_2\hookrightarrow L_{2d/(d-2)}
\end{equation}
to construct a family of approximated Green functions, establishing uniform estimates for the family, and applying a compactness theorem.
This approach was  given by Gr\"uter-Widman \cite{MR0657523} for Dirichlet Green functions for elliptic equations with measurable coefficients and further developed in  \cite{MR2341783, MR2718661} for those for elliptic systems with coefficients in the class of VMO (vanishing mean oscillations).
See also \cite{MR3105752} for Neumann Green functions for the elliptic systems. 
Remark that in all these work, the dimension $d$ is assumed to be greater than or equal to $3$ because of the usage of the embedding \eqref{220903@eq1}.

To prove the main result of this paper, we refine the aforementioned approach to be applicable to the setting for $\bR^2$, which is precisely where the strength of our paper lies.
The key ingredient is to utilize $W^{1}_r$-solvability with $r$ to be closed to $2$ combined with the Sobolev embedding $W^1_q\hookrightarrow L_{2q/(2-q)}$ for $q\in (1,2)$.
Note that the $W^{1}_r$-solvability follows from the standard reverse H\"older technique, which in general works for  differential operators with measurable coefficients.
This allows us to avoid the lack of the embedding \eqref{220903@eq1} when $d=2$ while keeping no regularity assumptions on the coefficients.
We also note that   our approach is general enough to permit the domain to be Reifenberg flat or Lipschitz, and allow other boundary value problems, such as mixed Dirichlet-conormal and Robin problems for both elliptic and Stokes systems.  
Therefore, once the corresponding solvability and embedding results are achieved for a boundary value problem (for instance, see \cite{MR4261267} for the reverse H\"older technique and embedding results related to the mixed problem),  we believe that, based on our approach, one can derive the corresponding two dimensional Green function for the problem with measurable coefficients in the domain.

Finally, we remark on some
 approaches to constructing two dimensional Green functions.
Regarding Dirichlet problems, 
we refer the reader to Dolzmann-M\"uller \cite{MR1354111}, where the authors gave an approach for the existence of the Green functions having logarithmic pointwise bounds for elliptic systems with measurable coefficients.
Their argument is based on  the solvability in weak Lebesgue spaces applied to the equation
$$
-D_i(a_{ij} D_j u)=\operatorname{div} F,\quad F(x)=\frac{|x|^{\top}}{2\pi |x|^2},
$$
where the right-hand side of the equation is indeed the Dirac delta function. 
This argument was adapted in  \cite{MR3906316} for Green functions for the Stokes systems with measurable coefficients.
The argument also works in higher dimensions, but it is questionable to be applicable to other boundary value problems.
We also refer the reader to D. Mitrea-I. Mitrea \cite{MR2763343} for Dirichlet Green functions with weak type estimates in Lipschitz domains.
Their proof relies on the theory of operators on interpolation scale of spaces associated with  second and higher order systems with constant coefficients, the bi-Laplacian, and the classical Stokes system.
For another approach, we refer to Dong-Kim \cite{MR2485428}, where they constructed Green functions for the elliptic systems by using heat kernel estimates, the argument in which requires first establishing pointwise bounds for the heat kernels.
See also \cite{MR2886465, MR3017032, MR3261109} and the references therein for work in this direction.
Lastly, we would like to mention two papers \cite{MR3169756, MR3320459} on the Green functions for the mixed problems in two dimensions.
In \cite{MR3169756} Talyor et al. presented another adaptation of the approach in \cite{MR0657523} for the Green functions of the mixed or Neumann problems for elliptic systems in a Lipschitz domain.
For this, the authors used an embedding of the Sobolev spaces into the spaces of BMO instead of \eqref{220903@eq1} and a trace theorem on the Lipschitz domain.  
In \cite{MR3320459}, Ott et al. established the existence of the Green functions for the classical Stokes system by considering  the solvability of the system with data lying in dual of  Lorentz-Sobolev spaces.

The remainder of this paper is  organized as follows.
In Section \ref{S2}, we state our main results along with some notation, assumptions, and the definition of the Green function.
We provide some auxiliary results in Section \ref{S3}.
Finally, we prove the main theorem in Sections \ref{S4} and \ref{S5}.
In the Appendix, we prove in detail a theorem for a chain of balls in a Reifenberg flat domain, which is needed to prove the logarithmic bound of the Green function.

\section{Preliminaries and main result}	\label{S2}

We first introduce basic notation  used throughout the paper.
Let  $\Omega$ be a bounded domain in $\bR^2$.
For a function $u$ in $\Omega$, we set 
$$
\|u\|_{L_{2, \infty}(\Omega)}=\sup_{t>0} t\big|\{x\in \Omega:|u(x)|>t\}\big|^{1/2}.
$$
For $1\le q\le \infty$, we define
$$
\tilde{L}_q(\Omega)=\{u\in L_q(\Omega):(u)_\Omega=0\}, \quad \widetilde{W}^{1}_{q}(\Omega)=\{u\in W^1_q(\Omega):(u)_\Omega=0\},
$$
where  $W^1_q(\Omega)$ is the usual Sobolev space and 
$$
(u)_\Omega=\dashint_{\Omega} u\,dx=\frac{1}{|\Omega|}\int_\Omega u\,dx.
$$
We  denote
$$
\Omega_r(x)=\Omega\cap B_r(x),
$$
where $B_r(x)$ is an Euclidean disk of radius $r$ centered at $x$.
By, for instance, $f \in L_q(\Omega)^2$ we mean that $f = (f^1,f^2)^\top$ and $f^1,f^2 \in L_q(\Omega)$.
As a superscript we also use $2\times 2$ (resp. $1\times 2$) in place of $2$ to denote a space for $2\times 2$ (resp. $1\times 2$) matrix valued functions.

\subsection{Conormal derivative problem}	

Let $\cL$ be a differential operator in divergence form
$$
\cL u=D_\alpha(A_{\alpha\beta}D_\beta u),
$$
where the coefficients $A_{\alpha\beta}=A_{\alpha\beta}(x)$ are $2\times 2$ matrix-valued functions in $\bR^2$ satisfying the strong ellipticity condition, that is, there is a constant $\lambda\in (0, 1]$ such that for any $x\in \bR^2$ and $\xi_\alpha\in \bR^2$, $\alpha\in \{1,2\}$, we have 
\begin{equation}		\label{210817@eq1}
|A_{\alpha\beta}(x)|\le \lambda^{-1}, \quad \sum_{\alpha,\beta=1}^2 A_{\alpha\beta}(x)\xi_\beta\cdot \xi_\alpha\ge \lambda\sum_{\alpha=1}^2 |\xi_\alpha|^2.
\end{equation}
Let $\cB$ be the conormal derivative operator associated with $\cL$ given by 
$$
\cB u=\nu^\alpha A_{\alpha\beta}D_\beta u,
$$
where $\nu=(\nu^1, \nu^2)^\top$ is the outward unit normal to $\partial \Omega$.
We define the adjoint operator $\cL^\star$ and its associated conormal derivative operator $\cB^\star$  by 
$$
\cL^\star u=D_\alpha((A_{\beta\alpha})^{\top} D_\beta u), \quad \cB^\star u= \nu^\alpha (A_{\beta\alpha})^{\top} D_\beta u.
$$
Note that the coefficients of  $\cL^\star$ also satisfy the ellipticity condition \eqref{210817@eq1} with the same constant $\lambda$.

Let $\Omega$ be a bounded domain in $\bR^2$ and $q,q_1\in (1,\infty)$ with $q_1\ge 2q/(2+q)$.
For $f\in \tilde{L}_{q_1}(\Omega)^2$, $f_\alpha\in L_q(\Omega)^2$,  and $g\in L_q(\Omega)$, we say that $(u,p)\in W^1_
q(\Omega)^2\times L_q(\Omega)$ is a weak solution of the problem
$$
\begin{cases}
\operatorname{div}u=g &\quad \text{in }\, \Omega,\\
\cL u+\nabla p=f+D_\alpha f_\alpha &\quad \text{in }\, \Omega,\\
\cB u+p\nu= \nu^\alpha f_\alpha &\quad \text{on }\, \partial \Omega,
\end{cases}
$$
if $\operatorname{div} u=g$ a.e. in $\Omega$ and 
$$
\int_\Omega A_{\alpha\beta}D_\beta u\cdot D_\alpha \phi\,dx+\int_\Omega p\operatorname{div}\phi\,dx=-\int_\Omega f\cdot \phi\,dx+\int_\Omega f_\alpha \cdot D_\alpha \phi\,dx
$$
holds for any $\phi\in W^1_{q/(q-1)}(\Omega)^2$.
Similarly, we  say that $(u, p)\in W^{1}_q(\Omega)^2\times L_q(\Omega)$ is a weak solution of the adjoint problem
$$
\begin{cases}
\operatorname{div}u=g &\quad \text{in }\, \Omega,\\
\cL^\star u+\nabla p=f+D_\alpha f_\alpha &\quad \text{in }\, \Omega,\\
\cB^\star u+p\nu=\nu^\alpha f_\alpha  &\quad \text{on }\, \partial \Omega,
\end{cases}
$$
if $\operatorname{div} u=g$ a.e. in $\Omega$ and 
$$
\int_\Omega A_{\alpha\beta}D_\beta \phi\cdot D_\alpha u\,dx+\int_\Omega p\operatorname{div}\phi\,dx=-\int_\Omega f\cdot \phi\,dx+\int_\Omega f_\alpha \cdot D_\alpha \phi\,dx
$$
holds for any $\phi\in W^1_{q/(q-1)}(\Omega)^2$. 

We remark that even when $\Omega$ is irregular so that neither the outer normal nor the trace of a $W^1_q(\Omega)$ function on the boundary is defined, the weak formulations above make sense because no boundary terms appear there.
 
\subsection{Green function}	\label{sect_2_2}
In the definition below, $G=G(x,y)$ is a $2\times 2$ matrix-valued function, $\Pi=\Pi(x,y)$ is a $1\times 2$ vector-valued function, $I$ is the $2\times 2$ identity matrix,  and $\delta_y$ is the Dirac delta function concentrated at $y$.

\begin{definition}		\label{D1}
Let $\Omega$ be a bounded domain in $\bR^2$.
We say that $(G, \Pi)$ is a Green function (for the flow velocity) of $\cL$ in $\Omega$ if it satisfies the following properties.
\begin{enumerate}[$(i)$]
\item
For any $y\in \Omega$, 
$$
G(\cdot,y)\in \widetilde{W}^1_1(\Omega)^{2\times 2}, \quad 
\Pi(\cdot,y)\in  L_1(\Omega)^{1\times 2}.
$$
\item
For any $y\in \Omega$, $(G(\cdot,y), \Pi(\cdot,y))$ satisfies 
\begin{equation}		\label{210815@A1}
\begin{cases}
\operatorname{div} G(\cdot,y)=0 &\quad \text{in }\, \Omega,\\
\cL G(\cdot,y)+\nabla \Pi(\cdot,y)=-\delta_yI+\frac{1}{|\Omega|}I&\quad \text{in }\, \Omega,\\
\cB G(\cdot, y)+\nu\Pi(\cdot,y) =0& \quad \text{on }\, \partial \Omega,
\end{cases}
\end{equation}
in the sense that 
for any $k\in \{1,2\}$ and $\phi\in \widetilde{W}^1_\infty(\Omega)^2\cap C(\Omega)^2$, we have 
$$
\operatorname{div} G^{\cdot k}(\cdot,y)=0 \quad \text{a.e. in }\, \Omega
$$
and 
$$
\begin{aligned}
&\int_\Omega A_{\alpha\beta} D_\beta G^{\cdot k}(\cdot ,y)\cdot D_\alpha \phi\,dx+\int_\Omega \Pi^k(\cdot ,y)\operatorname{div}\phi \,dx=\phi^k(y),
\end{aligned}
$$
where $G^{\cdot k}(\cdot,y)$ is the $k$th column of $G(\cdot,y)$.
\item
If $(u,p)\in \widetilde{W}^1_2(\Omega)^2\times L_2(\Omega)$ is a weak solution of the adjoint problem 
\begin{equation}		\label{171010@eq1}
\begin{cases}
\operatorname{div} u=g &\quad \text{in }\, \Omega,\\
\cL^\star u+\nabla p=f+D_\alpha f_\alpha &\quad  \text{in }\, \Omega,\\
\cB^\star u+p \nu=\nu^\alpha f_\alpha  & \quad \text{on }\, \partial \Omega,
\end{cases}
\end{equation}
where  $f\in \tilde{L}_{\infty}(\Omega)^2$, $f_\alpha\in L_\infty(\Omega)^2$, and $g\in {L}_\infty(\Omega)$, 
then for a.e. $y\in \Omega$, we have 
$$
u(y)=-\int_\Omega G(\cdot,y)^{\top}f\,dx+\int_\Omega D_\alpha G(\cdot,y)^{\top} f_\alpha\,dx+\int_\Omega \Pi(\cdot,y)^\top g\,dx,
$$
where $G(\cdot,y)^{\top}$ and $\Pi(\cdot,y)^{\top}$ are the transposes of  $G(\cdot,y)$ and $\Pi(\cdot,y)$.
\end{enumerate}
The Green function of the adjoint operator $\cL^\star$ is defined similarly.
\end{definition}

We remark that the property $(iii)$ in Definition \ref{D1} together with the unique solvability of the conormal derivative problem in $\widetilde{W}^1_2(\Omega)^2\times L_2(\Omega)$ (see Lemma \ref{200229@lem1}) gives the uniqueness of  a Green function.

\subsection{Main result}	
Our main result is the existence of the Green function satisfying the logarithmic pointwise bound.
For this, we impose the following Reifenberg flat condition on the boundary of the domain.

\begin{assumption}		\label{A1} 
Let $\gamma\in [0, 1/96]$.
There exists $R_0\in (0,1]$ such that the following holds:
for any $x_0\in \partial \Omega$ and $R\in (0,R_0]$, there is a coordinate system depending on $x_0$ and $R$ such that in this coordinate system (called the coordinate system associated with $(x_0,R)$), we have
$$
\{y:x_{0}^1+\gamma R<y^1\}\cap B_R(x_0)\subset \Omega_R(x_0)\subset\{y:x_{0}^1-\gamma R<y^1\}\cap B_R(x_0),
$$
where $x_{0}^1$ is the first coordinate of $x_0$ in the coordinate system.
\end{assumption}

\begin{remark}
Regarding the flatness parameter $\gamma$ in Assumption \ref{A1}, note that our main theorem (Theorem \ref{M1}) is independent of the size of $\gamma$ as long as $\gamma \leq 1/96$.
Thus, one can just set $\gamma = 1/96$ in  Assumption \ref{A1} insteand of $\gamma \in [0,1/96]$.
Clearly, a boundary satisfying Assumption \ref{A1} also satisfies Assumption \ref{A1} with $\gamma = 1/96$.
Also see Remark \ref{rem0712_1} below.
\end{remark}

In the theorem below, the coefficients $A_{\alpha\beta}$ are assumed to be measurable and satisfy the ellipticity condition \eqref{210817@eq1}.

\begin{theorem}		\label{M1}
Let $\Omega$ be a bounded domain in $\bR^2$ with $\operatorname{diam}\Omega\le K$ satisfying Assumption \ref{A1}.
Then there exist Green functions $(G, \Pi)$ of $\cL$ and $(G^{\star}, \Pi^\star)$ of $\cL^\star$.
Moreover, $G$ and $G^\star$ are continuous in $\{(x,y)\in \Omega\times \Omega:x\neq y\}$ and satisfy
\begin{equation}		\label{210720@eq2}
G(x,y)=G^\star(y,x)^{\top} \quad \text{for all }\, x,y\in \Omega, \quad x\neq y.
\end{equation}
Furthermore, the following estimates hold.
\begin{enumerate}[$(a)$]
\item
For any $x,y\in \Omega$ and $R\in (0, R_0]$, we have 
\begin{equation}		\label{230208_eq1}
\|DG(\cdot, y)\|_{L_q(\Omega_R(x))}+\|\Pi(\cdot, y)\|_{L_q(\Omega_R(x))}\le N_q R^{-1+2/q},
\end{equation}
where $1\le q<2$.
\item
There exists $q_0=q_0(\lambda)>2$ such that for any $y\in \Omega$ and $R\in (0, R_0]$, we have 
\begin{equation}		\label{210803@eq1}
\|DG(\cdot,y)\|_{L_{q}(\Omega\setminus \overline{B_R(y)})}+\|\Pi(\cdot,y)\|_{L_{q}(\Omega\setminus \overline{B_R(y)})}\le N_q R^{-\mu},
\end{equation}
where $2<q\le q_0$ and $\mu=1-2/q$.
Moreover, for any $x\in \overline{\Omega}$ satisfying $|x-y|\ge R$, we have 
\begin{equation}		\label{210721@eq1a}
[G(\cdot,y)]_{C^{\mu}(\Omega_{R/16}(x))}\le N_q R^{-\mu}.
\end{equation}
\item
For any $y\in \Omega$, we have 
\begin{equation}		\label{210721@eq2}
\|DG(\cdot,y)\|_{L_{2, \infty}(\Omega)}+\|\Pi(\cdot,y)\|_{L_{2,\infty}(\Omega)}\le N.
\end{equation}
\item
For any $x,y\in \Omega$ with $x\neq y$, we have 
\begin{equation}		\label{210721@eq2a}
|G(x,y)|\le N\log \bigg(\frac{K}{|x-y|}\bigg)+N.
\end{equation}
\end{enumerate}
In the above, $N=N(\lambda, R_0, K)>0$ and $N_q$ depends also on $q$.
\end{theorem}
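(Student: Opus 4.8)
The plan is to follow the Gr\"uter--Widman scheme --- build approximate Green functions, prove estimates uniform in the regularization parameter, and pass to the limit --- but with the solvability and embedding inputs adapted to two dimensions as advertised in the introduction. For $y\in\Omega$ and small $\varepsilon>0$, let $(G_\varepsilon(\cdot,y),\Pi_\varepsilon(\cdot,y))\in\widetilde W^1_2(\Omega)^{2\times 2}\times L_2(\Omega)^{1\times 2}$ be the weak solution (column by column) of the conormal Stokes problem with right-hand side $-\,|B_\varepsilon(y)|^{-1}\mathbf 1_{B_\varepsilon(y)}I+|\Omega|^{-1}I$ and zero conormal data; this exists and is unique by the $\widetilde W^1_2$-solvability of the conormal problem (Lemma \ref{200229@lem1}), the compatibility condition holding because the right-hand side integrates to zero. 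Testing the $G_\varepsilon^{\cdot k}(\cdot,y)$-equation against a weak solution $(u,p)$ of the adjoint conormal problem with bounded data $(f,f_\alpha,g)$, and conversely, gives the duality identity
$$
\dashint_{B_\varepsilon(y)}u^k\,dx=-\int_\Omega G_\varepsilon^{\cdot k}(\cdot,y)\cdot f\,dx+\int_\Omega D_\alpha G_\varepsilon^{\cdot k}(\cdot,y)\cdot f_\alpha\,dx+\int_\Omega\Pi_\varepsilon^k(\cdot,y)\,g\,dx,
$$
which is the mechanism converting regularity of the adjoint solution into estimates for $G_\varepsilon,\Pi_\varepsilon$.

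For the estimates away from $y$ (part $(b)$ and the H\"older bound \eqref{210721@eq1a}), fix $R\in(0,R_0]$ and take $\varepsilon<R/4$, and dualize in $L_q(\Omega\setminus\overline{B_R(y)})$ for $q\in(2,q_0]$: for $f_\alpha$ supported in $\Omega\setminus\overline{B_R(y)}$ with $q'=q/(q-1)\in[q_0/(q_0-1),2)$ and $\|f_\alpha\|_{L_{q'}}\le1$, solve the adjoint conormal problem with data $f_\alpha$ (and $f=g=0$). The $\widetilde W^1_{q'}$-solvability coming from the reverse H\"older technique of Section \ref{S3} gives $\|u\|_{\widetilde W^1_{q'}(\Omega)}\le N$; since $q'<2$ the two-dimensional Sobolev embedding yields $\|u\|_{L_{2q'/(2-q')}(\Omega)}\le N$, hence $\|u\|_{L_{q'}(B_R(y))}\le N|B_R|^{1/2}\le NR$. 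Because $u$ solves the homogeneous adjoint Stokes system in $B_R(y)$, local boundedness gives $\sup_{B_\varepsilon(y)}|u|\le N\big(\dashint_{B_R(y)}|u|^{q'}\big)^{1/q'}\le NR^{1-2/q'}=NR^{-\mu}$ with $\mu=1-2/q$. The duality identity then produces \eqref{210803@eq1} for $DG_\varepsilon$, and the same scheme with $g$ in place of $f_\alpha$ handles $\Pi_\varepsilon$; \eqref{210721@eq1a} follows from the $L_q$, $q>2$, bound via the Morrey--Sobolev embedding and interior regularity of $G_\varepsilon(\cdot,y)$ off $y$.

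The weak-$L_2$ bound \eqref{210721@eq2} then follows from \eqref{210803@eq1} with $q=q_0$: for $t>0$ split $A_t:=\{|DG_\varepsilon(\cdot,y)|>t\}$ as $(A_t\cap B_\rho(y))\cup(A_t\setminus B_\rho(y))$, bound the first piece by $\pi\rho^2$ and, via Chebyshev and \eqref{210803@eq1}, the second by $Nt^{-q_0}\rho^{-(q_0-2)}$; optimizing with $\rho\sim t^{-1}$ (legitimate while $\rho\gtrsim\varepsilon$) gives $|A_t|\le Nt^{-2}$, the complementary range $t^{-1}\lesssim\varepsilon$ being handled by an $\varepsilon$-uniform energy estimate on $B_{C\varepsilon}(y)$ (test the $G_\varepsilon^{\cdot k}$-equation against a cutoff of $G_\varepsilon^{\cdot k}$ corrected by a Bogovskii field, using the logarithmic bound on $B_{C\varepsilon}(y)$). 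Then \eqref{230208_eq1} is immediate from $\|h\|_{L_q(E)}\le N\|h\|_{L_{2,\infty}(\Omega)}|E|^{1/q-1/2}$ with $E=\Omega_R(x)$. For the logarithmic bound \eqref{210721@eq2a}, fix $x\ne y$, set $r=|x-y|$, $\varepsilon<r/8$; in $\Omega\setminus B_{2\varepsilon}(y)$ the pair $(G_\varepsilon,\Pi_\varepsilon)$ solves the homogeneous conormal Stokes system up to the bounded term $|\Omega|^{-1}I$, so the De Giorgi--Nash--Moser local boundedness estimate (valid up to the conormal boundary under Assumption \ref{A1}) applies on balls disjoint from $B_{2\varepsilon}(y)$; running it along a chain of balls joining a ball around $x$ to balls of radius $\sim\operatorname{diam}\Omega$ --- supplied by the chain-of-balls theorem of the Appendix --- with the oscillation at each step controlled by \eqref{210721@eq2}, and using dyadic annuli $2^{-j}r\le|z-y|\le 2^{-j+1}r$ to descend from scale $r$ toward scale $\varepsilon$, one accumulates $O(1)$ per step over $O(\log(K/r))$ steps, yielding \eqref{210721@eq2a}.

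Finally, the estimates of the previous two paragraphs being uniform in $\varepsilon$, a diagonal argument extracts a sequence $\varepsilon\to0$ along which $G_\varepsilon(\cdot,y)\to G(\cdot,y)$ weakly in $\widetilde W^1_q(\Omega)$ for every $q\in(1,2)$ and in $W^1_{q_0}$ on compact subsets of $\overline\Omega\setminus\{y\}$, with $\Pi_\varepsilon(\cdot,y)\to\Pi(\cdot,y)$ accordingly; the limit inherits $(a)$--$(d)$, and passing the duality identity to the limit (the adjoint solution is bounded, so the regularized term converges to $-u(y)$ at Lebesgue points) shows $(G,\Pi)$ satisfies Definition \ref{D1}. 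Running the construction for $\cL^\star$ gives $(G^\star,\Pi^\star)$, and feeding one Green function into the representation formula for the other produces \eqref{210720@eq2} and, with \eqref{210721@eq1a}, the joint continuity of $G$ on $\{x\ne y\}$. I expect Step on the logarithmic bound to be the main obstacle: in two dimensions $\int_{\Omega\setminus B_\rho(y)}|DG_\varepsilon(\cdot,y)|^2$ grows like $\log(1/\rho)$ rather than staying bounded, so the Gr\"uter--Widman bootstrap that closes in dimension $\ge 3$ fails here, and one must instead quantitatively propagate the oscillation of $G_\varepsilon(\cdot,y)$ across $O(\log(K/r))$ balls --- precisely why the chain-of-balls theorem for Reifenberg flat domains is needed; a secondary nuisance, present throughout, is keeping every estimate uniform down to scales comparable to $\varepsilon$ near $y$, where $G_\varepsilon$ no longer solves a homogeneous system.
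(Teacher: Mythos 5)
Your proposal follows the paper's strategy essentially step for step: approximated Green functions produced by the $W^1_q$ conormal solvability, duality against adjoint solutions whose local $L_\infty$ bound near the pole comes from the two-dimensional embeddings of Section \ref{S3_1}, the weak-$L_2$ bound by a Chebyshev splitting at scale $t^{-1}$, the chain-of-balls telescoping from the Appendix for the logarithmic bound, weak compactness in $\widetilde{W}^1_{q_1}(\Omega)$ with $q_1<2$ to pass to the limit, and the cutoff argument for the symmetry \eqref{210720@eq2}. One genuine (and legitimate) deviation: you obtain \eqref{230208_eq1} directly from \eqref{210721@eq2} via $\|h\|_{L_q(E)}\le N_q\|h\|_{L_{2,\infty}(\Omega)}|E|^{1/q-1/2}$, whereas the paper proves the local $L_s$ bound by a separate duality argument (Lemma \ref{210805@lem2}); your route is shorter and gives the same constant dependence.

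Two steps are compressed in a way that hides the actual work. First, the ``local boundedness'' $\sup_{B_\varepsilon(y)}|u|\le N(\dashint_{B_R(y)}|u|^{q'})^{1/q'}$ for the homogeneous adjoint system is not a black box for Stokes systems with measurable coefficients (there is no De Giorgi--Nash--Moser theory for systems); in two dimensions it is true, but only through the bootstrap the paper carries out in \eqref{230208_eq3d}--\eqref{230208_eq3e}: cut off near $y$, apply the global $W^1_s$-solvability to $(\eta u,\eta p)$ to upgrade $Du$ from $L_{s'}$ with $s'<2$ to $L_s$ with $s>2$ locally, then invoke Morrey (Lemma \ref{200308@lem1}); note also the case split $s\le 4$ versus $s>4$, needed because the intermediate exponent $2s'/(2-s')$ can exceed $q_0$. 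Second, your patch for the regime $t^{-1}\lesssim\varepsilon$ in the weak-$L_2$ argument (an energy estimate ``using the logarithmic bound on $B_{C\varepsilon}(y)$'') is circular as stated, since the logarithmic bound sits downstream of \eqref{210721@eq2}. The clean fix is the global estimate $\|DG_\varepsilon(\cdot,y)\|_{L_{q_0}(\Omega)}+\|\Pi_\varepsilon(\cdot,y)\|_{L_{q_0}(\Omega)}\le N\varepsilon^{-1+2/q_0}$ (the paper's \eqref{200303@A1}, an immediate consequence of the solvability with data $\Phi_{\varepsilon,y}$), which gives $|A_t|\le Nt^{-q_0}\varepsilon^{2-q_0}\le Nt^{-2}$ for $t\le\varepsilon^{-1}$ and, more to the point, shows that Lemma \ref{210804@lem3} holds with no restriction between $\varepsilon$ and $R$, so no separate small-scale regime is needed at all. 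With these repairs your argument coincides with the paper's.
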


Related to the theorem above, we have a few remarks.

\begin{remark}
Let  $(u,p)\in \widetilde{W}^1_2(\Omega)^2\times L_2(\Omega)$ be a weak solution of 
$$
\begin{cases}
\operatorname{div} u=0 &\quad \text{in }\, \Omega,\\
\cL u+\nabla p=f+D_\alpha f_\alpha &\quad  \text{in }\, \Omega,\\
\cB u+p \nu=\nu^\alpha f_\alpha  & \quad \text{on }\, \partial \Omega,
\end{cases}
$$
where  $f\in \tilde{L}_{\infty}(\Omega)^2$ and $f_\alpha\in L_\infty(\Omega)^2$.
Then, using the counterpart of $(iii)$ in Definition \ref{D1} for $(G^\star, \Pi^\star)$, we have that for a.e. $x\in \Omega$, 
$$
u(x)=-\int_\Omega G^\star(y,x)^\top f(y)\,dy+\int_\Omega D_\alpha G^\star(y, x)^\top f_\alpha(y)\,dy.
$$
Together with \eqref{210720@eq2}, this gives
$$
u(x)=-\int_\Omega G(x,y) f(y)\,dy+\int_\Omega D_\alpha G(x, y) f_\alpha(y)\,dy.
$$
\end{remark}

\begin{remark}		\label{210819@rmk2}
In Theorem \ref{M1}, one may consider $L_\infty$ or pointwise estimate for $DG$ and $\Pi$ under a regularity assumption on the coefficients $A_{\alpha\beta}$.
Indeed, by following the same steps
as in the proof of  \cite[Theorem 3.5]{MR3906316}, we see that 
$$
\operatorname*{ess\,sup}_{B_{|x-y|/4}(x)}\big(|DG(\cdot,y)|+|\Pi(\cdot,y)|\big)\le \frac{N}{|x-y|}
$$
for any $x,y\in \Omega$ with $0<|x-y|\le \frac{1}{2}\operatorname{dist}(y, \partial \Omega)$, provided that $A_{\alpha\beta}$ are of {\em{partial Dini mean oscillation}}.
We shall say that a locally integrable function is of partial Dini mean oscillation if it is merely measurable in one direction and its $L_1$-mean oscillation in the other directions satisfy a Dini condition; see \cite[Definition 2.1]{MR3912724} or \cite[Definition 2.1]{MR3906316} for more precise definition. 
Moreover, as in \cite[Remark 3.6]{MR3906316}, we have 
$$
|D_xG(x,y)|+|\Pi(x,y)|\le \frac{N}{|x-y|}
$$
if we assume further that  $A_{\alpha\beta}$ are of Dini mean oscillation in {\em{all}} the directions.
In the above, the constant $N$ depends only on $\lambda$, $R_0$, $K$, and a Dini function derived from the $L_1$-mean oscillation.
\end{remark}

\begin{remark}
							\label{rem0712_1}
Lipschitz domains with small Lipschitz constants are Reifenberg flat.
Thus, our main theorem holds for all Lipschitz domains provided that their Lipschitz constants are not greater than $1/96$.
On the other hand, the embeddings presented in Section \ref{S3_1} also hold for Lipschitz domains whenever the Lipschitz constants are bounded.
Thus, in our main theorem (Theorem \ref{M1}), instead of Assumption \ref{A1}, one can just assume that the domain is Lipschitz with bounded Lipschitz constant.
That says that the flatness of the boundary is irrelevant to Theorem \ref{M1} for a Lipschitz domain.

When $\Omega$ is a Lipschitz domain so that the boundary trace of a $W^1_q(\Omega)$ function is well defined,
then one may use the normalization condition 
\begin{equation}		\label{210817@eq2}
\int_{\partial \Omega} G(x, y)\,d\sigma_x=0
\end{equation}
instead of 
\begin{equation}		\label{210817@eq2a}
\int_\Omega G(x,y)\,dx=0.
\end{equation}
In this case, the Green function $(G, \Pi)$ can be defined as a solution of the problem 
$$
\begin{cases}
\operatorname{div} G(\cdot,y)=0 &\quad \text{in }\, \Omega,\\
\cL G(\cdot,y)+\nabla \Pi(\cdot,y)=-\delta_yI &\quad \text{in }\, \Omega,\\
\cB G(\cdot, y)+\nu\Pi(\cdot,y) =-\frac{1}{|\partial \Omega|}I& \quad \text{on }\, \partial \Omega.
\end{cases}
$$
We refer the reader to \cite{MR3017032} for Neumann Green functions of elliptic and parabolic systems with the normalization condition \eqref{210817@eq2a} and \cite{MR3105752} for those of elliptic systems with the normalization condition \eqref{210817@eq2}.
\end{remark}

\section{Auxiliary results}	\label{S3}

Hereafter in the paper, we use the following notation.

\begin{notation}
For a given function $f$, if there is a continuous version of $f$, that is, there is a continuous function $\tilde{f}$ such that $\tilde{f}=f$ in the almost everywhere sense, then we replace $f$ with $\tilde{f}$ and denote the version again by $f$.
\end{notation}

\subsection{Embeddings}			\label{S3_1}

In this subsection,  we present some inequalities suitable to our problem.
For $q\in (1,2)$, we denote by $q^\star$ its Sobolev conjugate, i.e., 
$$
q^\star=\frac{2q}{2-q}.
$$
We start with a local Sobolev-Poincar\'e inequality in a Reifenberg flat domain.

\begin{lemma}		\label{210804@lem1}
Let $\Omega$ be a domain in $\bR^2$ satisfying  Assumption \ref{A1} and $u\in W^1_q(\Omega)$ with $q\in (1,2)$.
Then for any $x_0\in \overline{ \Omega}$ and $R\in (0,R_0]$,  we have 
\begin{equation}		\label{210804@eq1}
\|u-(u)_{\Omega_{R/8}(x_0)}\|_{L_{q^\star}(\Omega_{R/8}(x_0))}\le N \|Du\|_{L_q(\Omega_{R}(x_0))},
\end{equation}
where $N=N(q)$.
\end{lemma}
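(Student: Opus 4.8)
The plan is to reduce the boundary Sobolev--Poincar\'e inequality to a combination of the interior (whole-space) Sobolev inequality and the Reifenberg flat structure of $\partial\Omega$, via an extension argument. First I would distinguish two cases according to the position of $x_0$ relative to the boundary. If $B_{R/4}(x_0)\subset\Omega$, then $\Omega_{R/8}(x_0)=B_{R/8}(x_0)$ is a ball and $\Omega_R(x_0)\supset B_{R/4}(x_0)$, so the inequality is the classical Sobolev--Poincar\'e inequality on a ball (with the standard scaling exponent matching: on $B_{R/8}$ one has $\|u-(u)_{B_{R/8}}\|_{L_{q^\star}}\le N\|Du\|_{L_q(B_{R/8})}\le N\|Du\|_{L_q(\Omega_R(x_0))}$), and no dimensional constant on $R$ appears because $q^\star$ is exactly the Sobolev conjugate. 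If instead $\mathrm{dist}(x_0,\partial\Omega)<R/4$, pick a boundary point $x_1\in\partial\Omega$ with $|x_0-x_1|<R/4$; then $\Omega_{R/8}(x_0)\subset\Omega_{3R/8}(x_1)$ and $\Omega_{R/2}(x_1)\subset\Omega_R(x_0)$, so after replacing $R$ by a comparable radius it suffices to prove the inequality centered at a boundary point, say $\|u-(u)_{\Omega_{r}(x_1)}\|_{L_{q^\star}(\Omega_r(x_1))}\le N\|Du\|_{L_{2r}(\Omega_{2r}(x_1))}$ for $x_1\in\partial\Omega$ and $2r\le R_0$.

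For the boundary case I would work in the coordinate system associated with $(x_1,2r)$ from Assumption \ref{A1}, so that $\Omega_{2r}(x_1)$ is trapped between two half-disks differing by a strip of width $2\gamma\cdot 2r$. The key point is that such a domain is a bounded Lipschitz (indeed $(\varepsilon,\delta)$) domain with Lipschitz character controlled solely by $\gamma\le 1/96$, uniformly in $r$ and $x_1$; hence there is a bounded Sobolev extension operator $E:W^1_q(\Omega_{2r}(x_1))\to W^1_q(B_{2r}(x_1))$ — or more conveniently into $W^1_q(\bR^2)$ — whose operator norm, after rescaling to unit size, depends only on $q$ (and on $\gamma$, hence is absolute). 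Extend $u$, apply the Gagliardo--Nirenberg--Sobolev / Sobolev--Poincar\'e inequality on the full ball $B_{2r}(x_1)$ to the extension $Eu$, namely $\|Eu-(Eu)_{B_{2r}}\|_{L_{q^\star}(B_{2r})}\le N(q)\|D(Eu)\|_{L_q(B_{2r})}$, restrict back to $\Omega_{r}(x_1)\subset B_{2r}(x_1)$, bound $\|D(Eu)\|_{L_q(B_{2r})}\le N\|Du\|_{L_q(\Omega_{2r}(x_1))}$ by the extension estimate, and finally replace the average $(Eu)_{B_{2r}}$ by $(u)_{\Omega_r(x_1)}$ using the elementary fact that for any $L_{q^\star}$ function $v$ on a set $S$ and any constant $c$, $\|v-(v)_S\|_{L_{q^\star}(S)}\le 2\|v-c\|_{L_{q^\star}(S)}$. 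A scaling normalization to radius $1$ shows all constants are $R$-independent.

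The main obstacle is the extension step: one must be sure that the extension operator's norm is genuinely uniform over all boundary points $x_1$, all radii $r\in(0,R_0/2]$, and — crucially — does not blow up as $\gamma\to 0$ or degenerate for the various admissible shapes of $\Omega_{2r}(x_1)$ sandwiched between the two half-disks. This is a standard but slightly delicate point; the clean way to handle it is to invoke a quantitative Sobolev extension theorem for Lipschitz domains (e.g. Stein's extension operator, or Jones' extension for uniform domains) together with the observation that, after rescaling $x\mapsto (x-x_1)/(2r)$, every such $\Omega_{2r}(x_1)$ lies in a fixed compact family of Lipschitz domains with Lipschitz constant at most $\gamma\le 1/96$, for which the extension norm is bounded by a single constant $N(q)$. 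Alternatively, since $\Omega_{2r}(x_1)$ contains the half-disk $\{y^1>x_1^1+2\gamma r\}\cap B_{2r}(x_1)$ and is contained in $\{y^1>x_1^1-2\gamma r\}\cap B_{2r}(x_1)$, one can even bypass a general extension theorem by a direct reflection across the hyperplane $\{y^1=x_1^1-2\gamma r\}$ (which contains $\Omega_{2r}(x_1)$ on one side), extending $u$ by $u(x^1,x^2)\mapsto u(2(x_1^1-2\gamma r)-x^1,x^2)$; this even extension lies in $W^1_q$ of the doubled set $\supset B_{2r}(x_1)$ with gradient norm at most twice that of $u$, and then one applies Sobolev--Poincar\'e on $B_{2r}(x_1)$ as above. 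Everything else is routine: covering/containment of disks, the scaling of $q^\star$, and the triangle-inequality replacement of averages.
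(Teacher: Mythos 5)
Your case split and the reduction to a boundary-centered estimate are fine and agree with the paper's strategy; the interior case is indeed just the classical Sobolev--Poincar\'e inequality on a ball. The gap is in the boundary case, where the paper simply cites the Sobolev--Poincar\'e inequality for Reifenberg flat domains from \cite[Theorem 3.5]{MR3809039}, while you try to reprove it via an extension operator for Lipschitz domains. The difficulty is that a domain satisfying Assumption \ref{A1} need \emph{not} be Lipschitz: the assumption only traps $\partial\Omega$ between two parallel lines at each individual scale, and the associated coordinate system is allowed to rotate as the scale and the base point change. A mild Koch-snowflake-type boundary satisfies the Reifenberg condition with arbitrarily small $\gamma$ yet is not locally the graph of any Lipschitz function, and it has infinite length, so its truncations $\Omega_{2r}(x_1)$ do not lie in any ``fixed compact family of Lipschitz domains with Lipschitz constant at most $\gamma$.'' Consequently Stein's extension theorem does not apply, and the uniformity of the extension norm --- which you correctly identify as the crux --- is exactly what is left unproved.

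The reflection shortcut does not repair this. The piece of $\partial\Omega$ inside $B_{2r}(x_1)$ lies in the open strip $\{|y^1-x_1^1|<2\gamma r\}$, not on the line $\ell=\{y^1=x_1^1-2\gamma r\}$; reflecting across $\ell$ yields a function defined only on $\Omega_{2r}(x_1)\cup\sigma(\Omega_{2r}(x_1))$, and any point of $B_{2r}(x_1)\setminus\Omega$ in the strip belongs to neither set (its mirror image has first coordinate below $x_1^1-2\gamma r$, hence cannot lie in $\Omega_{2r}(x_1)$). So the ``extension'' is undefined on a set of positive measure inside $B_{2r}(x_1)$, and even where the two copies meet, the gluing happens along $\partial\Omega$ rather than along $\ell$, so the even-reflection argument for membership in $W^1_q$ is unavailable. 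To make your route work you would need the genuinely nontrivial fact that Reifenberg flat domains with small constant are uniform (Jones $(\varepsilon,\delta)$, equivalently NTA) domains, that the truncations $\Omega_{2r}(x_1)$ inherit this with uniform constants, and then Jones' extension theorem; at that point you have essentially reproved the cited \cite[Theorem 3.5]{MR3809039}, whose proof instead goes through a Riesz-potential representation on these truncated sets.
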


\begin{proof}
If $B_{R/8}(x_0)\subset \Omega$, then \eqref{210804@eq1} follows immediately from the interior Sobolev-Poincar\'e inequality.
Otherwise, i.e., if  $B_{R/8}(x_0)\not\subset \Omega$, 
we take $y_0\in \partial \Omega$ such that $\operatorname{dist}(x_0, \partial \Omega)=|x_0-y_0|$.
It then follows from  \cite[Theorem 3.5]{MR3809039} that 
$$
\|u-(u)_{\Omega_{R/4}(y_0)}\|_{L_{q^\star}(\Omega_{R/4}(y_0))}\le N\|Du\|_{L_q(\Omega_{R/2}(y_0))},
$$
where $N=N(q)$.
Using this, the triangle inequality, and  the chain of inclusions 
\begin{equation}		\label{210706@eq1}
\Omega_{R/8}(x_0)\subset \Omega_{R/4}(y_0)\subset \Omega_{R/2}(y_0)\subset \Omega_{R}(x_0), 
\end{equation}
we obtain that
$$
\begin{aligned}
&\|u-(u)_{\Omega_{R/8}(x_0)}\|_{L_{q^\star}(\Omega_{R/8}(x_0))}\\
&\le \|u-(u)_{\Omega_{R/4}(y_0)}\|_{L_{q^\star}(\Omega_{R/4}(y_0))}+\|(u)_{\Omega_{R/4}(y_0)}-(u)_{\Omega_{R/8}(x_0)}\|_{L_{q^\star}(\Omega_{R/4}(y_0))}\\
&\le N \|u-(u)_{\Omega_{R/4}(y_0)}\|_{L_{q^\star}(\Omega_{R/4}(y_0))}\\
&\le N \|Du\|_{L_q(\Omega_{R}(x_0))},
\end{aligned}
$$
which gives  \eqref{210804@eq1}.
\end{proof}

It is well known that the Sobolev-Poincar\'e inequality holds over $\Omega$ when $\Omega$ is a bounded Reifenberg flat domain since it is in the category of extension domains.
Below, for the reader's convenience, we present this result with precise information on the parameters on which the constant in the inequality depends.

\begin{lemma}		\label{210709@lem2}
Let $\Omega$ be a bounded domain in $\bR^2$ with $\operatorname{diam}\Omega\le K$ satisfying  Assumption \ref{A1} and  $u\in W^1_q(\Omega)$ with $q\in (1,2)$.  
Then we have
$$
\|u-(u)_{\Omega}\|_{L_{q^\star}(\Omega)}\le N\|Du\|_{L_q(\Omega)},
$$
where   $N=N(R_0, K, q)$.
\end{lemma}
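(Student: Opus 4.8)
The plan is to derive the global Sobolev--Poincar\'e inequality on $\Omega$ from the local one (Lemma \ref{210804@lem1}) by a covering and chaining argument, exploiting that a bounded Reifenberg flat domain satisfies a uniform measure lower bound $|\Omega_r(x)| \ge c r^2$ for $x \in \overline{\Omega}$ and $r \le R_0$, together with its connectedness. First I would fix a scale $r = R_0/8$ (or the comparable quantity $\min\{R_0, K\}$ up to constants, to make sure the balls are not larger than the domain) and choose a finite cover of $\overline{\Omega}$ by disks $B_{r/8}(x_i)$, $i = 1, \dots, M$, with the $x_i \in \overline{\Omega}$; the number $M$ of such disks is bounded by a constant depending only on $K$ and $R_0$ since $|\Omega| \le N K^2$ and each $\Omega_{r/8}(x_i)$ has measure bounded below by $c(r/8)^2$, and one can extract a cover with bounded overlap. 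On each piece Lemma \ref{210804@lem1} gives $\|u - (u)_{\Omega_{r/8}(x_i)}\|_{L_{q^\star}(\Omega_{r/8}(x_i))} \le N \|Du\|_{L_q(\Omega_r(x_i))} \le N\|Du\|_{L_q(\Omega)}$.

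The next step is to control the differences between the local averages $c_i := (u)_{\Omega_{r/8}(x_i)}$ and pass to a single constant. Since $\Omega$ is connected (Reifenberg flatness with $\gamma$ small forces connectedness, or one works on a connected component), the cover can be arranged so that its nerve is connected: one can pass from any index to any other through a chain of indices $i_0, i_1, \dots, i_L$ with consecutive pieces $\Omega_{r/8}(x_{i_j})$ and $\Omega_{r/8}(x_{i_{j+1}})$ overlapping in a set of measure $\ge c r^2$, and with $L \le M$. On such an overlap, H\"older's inequality and the measure lower bound give
\[
|c_{i_j} - c_{i_{j+1}}| \le N r^{-2/q^\star}\big(\|u - c_{i_j}\|_{L_{q^\star}(\Omega_{r/8}(x_{i_j}))} + \|u - c_{i_{j+1}}\|_{L_{q^\star}(\Omega_{r/8}(x_{i_{j+1}}))}\big) \le N \|Du\|_{L_q(\Omega)},
\]
where $N$ now absorbs powers of $r \sim \min\{R_0,K\}$ and hence depends on $R_0, K, q$. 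Chaining along the nerve, every $|c_i - c_j| \le N M \|Du\|_{L_q(\Omega)}$, so fixing one index $i_0$ and writing $c = c_{i_0}$ we get $\|u - c\|_{L_{q^\star}(\Omega_{r/8}(x_i))} \le N\|Du\|_{L_q(\Omega)}$ for every $i$. Summing over $i = 1,\dots,M$ (using $\ell^{q^\star} \le \ell^1$ or just finiteness of $M$) yields $\|u - c\|_{L_{q^\star}(\Omega)} \le N\|Du\|_{L_q(\Omega)}$.

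Finally I would replace the constant $c$ by the mean $(u)_\Omega$: since $\|u - (u)_\Omega\|_{L_{q^\star}(\Omega)} \le \|u - c\|_{L_{q^\star}(\Omega)} + \|c - (u)_\Omega\|_{L_{q^\star}(\Omega)}$ and $|c - (u)_\Omega| = |(u - c)_\Omega| \le |\Omega|^{-1/q^\star}\|u-c\|_{L_{q^\star}(\Omega)} \cdot |\Omega|^{1/q^\star}/|\Omega|^{1 - 1/q^\star}$, more simply by Jensen $|c - (u)_\Omega| \le \dashint_\Omega |u - c|\,dx \le |\Omega|^{-1/q^\star}\|u-c\|_{L_{q^\star}(\Omega)}$, so $\|c - (u)_\Omega\|_{L_{q^\star}(\Omega)} = |\Omega|^{1/q^\star}|c - (u)_\Omega| \le \|u - c\|_{L_{q^\star}(\Omega)}$. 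This gives the claim with $N = N(R_0, K, q)$.

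The main obstacle is the chaining bookkeeping: one must be careful that the finite cover can simultaneously be taken with controlled cardinality $M = M(R_0,K)$, with a connected nerve, and with each adjacent overlap carrying a quantitative measure lower bound $\ge c(R_0,K) r^2$ — all of which ultimately rest on the interior corkscrew/measure-density property of Reifenberg flat domains and on connectedness. The appendix of the paper proves a chain-of-balls theorem in Reifenberg flat domains precisely for such purposes, so I would invoke that (or the standard fact that bounded Reifenberg flat domains are $W^1_q$-extension domains, from which the Sobolev--Poincar\'e inequality with a constant depending on the extension operator norm follows, and then track that this norm depends only on $R_0$, $K$, $q$). All other steps — the local inequality, H\"older, Jensen — are routine.
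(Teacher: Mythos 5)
Your argument is correct and follows essentially the same route as the paper: a finite chain of balls of radius comparable to $R_0$ covering $\Omega$, the local Sobolev--Poincar\'e inequality of Lemma \ref{210804@lem1} on each piece, and a chaining step that rests on the measure density $|\Omega_r(x)|\ge cr^2$ coming from Assumption \ref{A1}. The only cosmetic difference is that you chain by comparing the local averages $c_i$ on overlaps and then swap the resulting constant for $(u)_\Omega$ at the end, whereas the paper chains through the double integral $\iint_{\Omega\times\Omega}|u(x)-u(y)|^{q^\star}\,dx\,dy$; the two bookkeeping schemes are equivalent.
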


\begin{proof}
Note that for any $x,y,z\in \Omega$ satisfying 
$$
|x-y|\le R_0/16, \quad z\in \Omega_{R_0/16}(x)\cap \Omega_{R_0/16}(y),
$$
by the triangle inequality, \eqref{210804@eq1}, and the fact that  
$$
\Omega_{R_0/16}(z)\subset \Omega_{R_0/8}(x)\cap \Omega_{R_0/8}(y),
$$
we have
\begin{align}
\nonumber
&\int_{\Omega_{R_0/8}(y)} \int_{\Omega_{R_0/8}(x)} |u(\tilde{x})-u(\tilde{y})|^{q^\star}\,d\tilde{x}\,d\tilde{y}\\
\nonumber
&\le N \int_{\Omega_{R_0/8}(x)} \big|u-(u)_{\Omega_{R_0/16}(z)}\big|^{q^\star}\,d\tilde{x} + N\int_{\Omega_{R_0/8}(y)} \big|u-(u)_{\Omega_{R_0/16}(z)}\big|^{q^\star}\,d\tilde{y}\\
\nonumber
&\le N\int_{\Omega_{R_0/8}(x)} \big|u-(u)_{\Omega_{R_0/8}(x)}\big|^{q^\star}\,d\tilde{x} +N\int_{\Omega_{R_0/8}(y)} \big|u-(u)_{\Omega_{R_0/8}(y)}\big|^{q^\star}\,d\tilde{y}\\
\label{210721@B1}
&\le N \|Du\|_{L_q(\Omega_{R_0}(x))}^{q^\star}+N\|Du\|^{q^\star}_{L_q(\Omega_{R_0}(y))},
\end{align}
where $N=N(R_0, q)$.

Now, we choose $z_1,\ldots, z_m\in \Omega$, where $m=m(R_0, K)$, satisfying 
$$
|z_i-z_{i+1}|\le R_0/16, \quad i\in \{1,\ldots, m-1\},
$$
and 
$$
\Omega\subset \bigcup_{i=1}^m \Omega_{R_0/8}(z_i).
$$
Then by the triangle inequality and \eqref{210721@B1}, we have 
$$
\begin{aligned}
\int_{\Omega}|u-(u)_\Omega|^{q^\star}\,dx
&\le N \sum_{i,j=1}^m \int_{\Omega_{R_0/8}(z_i)} \int_{\Omega_{R_0/8}(z_j)} |u(x)-u(y)|^{q^\star}\,dx\,dy\\
&\le N \sum_{i=1}^{m-1} \int_{\Omega_{R_0/8}(z_i)} \int_{\Omega_{R_0/8}(z_{i+1})} |u(x)-u(y)|^{q^\star}\,dx\,dy,
\end{aligned}
$$
where $N=N(R_0, K, q)$.
Indeed, to see the second inequality above, for $i+1 < j$, one can use
$$
\begin{aligned}
&\int_{\Omega_{R_0/8}(z_i)}\int_{\Omega_{R_0/8}(z_j)} |u(x) - u(y)|^{q^\star}  \, dx\,dy \\
&\leq N \int_{\Omega_{R_0/8}(z_i)}\int_{\Omega_{R_0/8}(z_{i+1})} |u(x) - u(y)|^{q^\star} \, dx\,dy\\
&\quad + N \int_{\Omega_{R_0/8}(z_{i+1})}\int_{\Omega_{R_0/8}(z_j)} |u(x) - u(y)|^{q^\star} \, dx\,dy,
\end{aligned}
$$
where the second term can be similarly split if $i+2 < j$.
Together with \eqref{210721@B1}, the above inequalities give the desired estimate.
\end{proof}

\begin{lemma}		\label{200308@lem1}
Let $\Omega$ be a domain in $\bR^2$  satisfying  Assumption \ref{A1} and $u\in W^{1}_{q}(\Omega)$ with $q>2$. 
Then for any $x_0\in \overline \Omega$ and $R\in (0,R_0]$, we have 
\begin{equation}		\label{210804@eq3}
[u]_{C^{1-2/q}(\Omega_{R/8}(x_0))}\le N \|Du\|_{L_q(\Omega_{R}(x_0))}
\end{equation}
and 
\begin{equation}		\label{210804@eq3a}
\|u\|_{L_\infty(\Omega_{R/8}(x_0))}\le N R^{1-2/q}\|Du\|_{L_q(\Omega_{R}(x_0))}+NR^{-2}\|u\|_{L_1(\Omega_{R/8}(x_0))},
\end{equation}
where $N=N(q)$.
\end{lemma}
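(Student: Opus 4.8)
The plan is to reduce the boundary case to the interior Morrey/Sobolev embedding by the same flattening-free patching argument used in Lemma \ref{210804@lem1}, exploiting the chain of inclusions \eqref{210706@eq1}. First I would dispose of the interior case: if $B_{R/8}(x_0)\subset\Omega$, then \eqref{210804@eq3} is the classical Morrey embedding $W^1_q(B_{R/8}(x_0))\hookrightarrow C^{0,1-2/q}$ applied on the ball, and \eqref{210804@eq3a} follows by combining the seminorm bound with the estimate $\|u\|_{L_\infty}\le \operatorname{osc} u + |(u)_{\Omega_{R/8}(x_0)}|$, where the oscillation is controlled by $R^{1-2/q}[u]_{C^{1-2/q}}$ and the average is controlled by $R^{-2}\|u\|_{L_1(\Omega_{R/8}(x_0))}$ (Hölder, or just the definition of the average).

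For the boundary case $B_{R/8}(x_0)\not\subset\Omega$, I would pick $y_0\in\partial\Omega$ with $\operatorname{dist}(x_0,\partial\Omega)=|x_0-y_0|$, so that $|x_0-y_0|\le R/8$, and invoke the boundary Morrey-type embedding on $\Omega_{R/4}(y_0)$ in a Reifenberg flat domain — the analogue of \cite[Theorem 3.5]{MR3809039} used in the proof of Lemma \ref{210804@lem1}, now in the supercritical range $q>2$, giving
\[
[u]_{C^{1-2/q}(\Omega_{R/4}(y_0))}\le N\|Du\|_{L_q(\Omega_{R/2}(y_0))}.
\]
Since $\Omega_{R/8}(x_0)\subset\Omega_{R/4}(y_0)$ and $\Omega_{R/2}(y_0)\subset\Omega_R(x_0)$ by \eqref{210706@eq1}, restricting the seminorm to the smaller set and enlarging the gradient norm to the larger set yields \eqref{210804@eq3}. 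For \eqref{210804@eq3a}, writing any point of $\Omega_{R/8}(x_0)$ against a fixed reference point and using $\operatorname{diam}\Omega_{R/8}(x_0)\le R/4$, we get $\|u-(u)_{\Omega_{R/8}(x_0)}\|_{L_\infty(\Omega_{R/8}(x_0))}\le N R^{1-2/q}[u]_{C^{1-2/q}(\Omega_{R/8}(x_0))}\le NR^{1-2/q}\|Du\|_{L_q(\Omega_R(x_0))}$, and then $\|u\|_{L_\infty}\le \|u-(u)_{\Omega_{R/8}(x_0)}\|_{L_\infty}+|(u)_{\Omega_{R/8}(x_0)}|$ with $|(u)_{\Omega_{R/8}(x_0)}|\le |\Omega_{R/8}(x_0)|^{-1}\|u\|_{L_1(\Omega_{R/8}(x_0))}\le NR^{-2}\|u\|_{L_1(\Omega_{R/8}(x_0))}$; here I use that the measure density bound $|\Omega_{R/8}(x_0)|\ge cR^2$ holds uniformly because of the Reifenberg flatness (the lower inclusion in Assumption \ref{A1} forces $\Omega_{R/8}(x_0)$ to contain a half-disk-like set of measure comparable to $R^2$).

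The main obstacle — really the only nontrivial point — is justifying the boundary embedding constant in the Reifenberg flat setting with a constant depending only on $q$ (and not on $R_0$, $K$, or the point): this is exactly parallel to the role played by \cite[Theorem 3.5]{MR3809039} in Lemma \ref{210804@lem1}, and I would cite the same source (or the supercritical version of the Sobolev embedding on $(\varepsilon,\delta)$/extension domains, noting that Reifenberg flat domains with $\gamma\le 1/96$ are uniform domains with uniformity constants controlled only by the absolute constant $1/96$, hence the extension operator and the embedding constant are universal at the relevant scales $R\le R_0$). Everything else is the triangle inequality, the scaling of the Hölder seminorm under $\operatorname{diam}\le R/4$, and the uniform lower volume bound on $\Omega_{R/8}(x_0)$; no flattening of the boundary is needed, consistent with Remark \ref{rem0712_1}.
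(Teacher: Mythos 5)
Your proposal is correct in outline and identifies the crux accurately, but the key boundary step is handled differently from the paper, and your preferred justification of it is the one soft spot. The paper cannot simply cite a ``supercritical analogue of \cite[Theorem 3.5]{MR3809039}'': that theorem is a Sobolev--Poincar\'e inequality in the subcritical range and has no off-the-shelf Morrey counterpart there. Instead, the paper reduces to $x_0\in\partial\Omega$ via the chain of inclusions \eqref{210706@eq1} (as you do) and then proves the H\"older bound \emph{pointwise}: for $x,y\in\Omega_{R/4}(x_0)$ with $r=|x-y|$ it splits into $r<R/32$ and $r\ge R/32$, and in the first case further into $B_r(x)\subset\Omega$ (interior Morrey) versus $r\ge\operatorname{dist}(x,\partial\Omega)$, where it uses the Riesz-potential representation $|u(z)-\bar u|\le N\int_{\Omega_{6r}(\hat x)}|Du(\xi)|\,|z-\xi|^{-1}\,d\xi$ from \cite[p.~2372]{MR3809039} together with H\"older's inequality to get $|u(x)-u(y)|\le Nr^{1-2/q}\|Du\|_{L_q(\Omega_{R/2}(x_0))}$. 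Your alternative route through the fact that small-constant Reifenberg flat domains are uniform (hence local $W^1_q$-extension) domains, combined with the scale invariance of \eqref{210804@eq3}, is a legitimate way to get the same boundary embedding with $N=N(q)$, but it imports a nontrivial external fact rather than the self-contained potential estimate the paper relies on; if you go that way you should state and source it precisely. Your derivation of \eqref{210804@eq3a} from \eqref{210804@eq3} (oscillation plus average, with the measure density $|\Omega_{R/8}(x_0)|\ge cR^2$ from Assumption \ref{A1}) is exactly the ``easy consequence'' the paper leaves to the reader and is fine.
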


\begin{proof}
We only prove \eqref{210804@eq3} because \eqref{210804@eq3a} is an easy consequence of \eqref{210804@eq3}. 
Due to the interior Morrey’s inequality along with the technique of the chain of inclusions (see \eqref{210706@eq1}), we only need to show 
$$
[u]_{C^{1-2/q}(\Omega_{R/4}(x_0))}\le N \|Du\|_{L_q(\Omega_{R/2}(x_0))},
$$
where $N=N(q)$,
provided that  $x_0\in \partial \Omega$ and $R\in (0, R_0]$.
For this, we claim that 
for any $x,y\in \Omega_{R/4}(x_0)$ with $x\neq y$, 
\begin{equation}		\label{200303@eq1}
\frac{|u(x)-u(y)|}{|x-y|^{1-2/q}}\le N \|Du\|_{L_{q}(\Omega_{R/2}(x_0))}.
\end{equation}
Set $r=|x-y|$.
We consider the following two cases:
$$
r<R/32, \quad r\ge R/32.
$$
\begin{enumerate}[i.]
\item
$r<R/32$.
If $r< \operatorname{dist}(x,\partial \Omega)$, i.e., $B_{r}(x)\subset\Omega$, then \eqref{200303@eq1} follows from Morrey's inequality in the ball $B_{r}(x)$ with the fact that $B_{r}(x)\subset \Omega_{R/2}(x_0)$.
If  $r\ge \operatorname{dist}(x,\partial \Omega)$, we take $\hat{x}\in \partial \Omega$ such that $|x-\hat{x}|=\operatorname{dist}(x,\partial \Omega)$. 
Then,
\[
x,y \in \Omega_{3r}(\hat{x}) \subset \Omega_{6r}(\hat{x})\subset \Omega_{R/2}(x_0).
\]
By the last inequality in  \cite[p. 2372]{MR3809039} and H\"older's inequality, there exists a constant $\bar{u}$ such that
$$
|u(z)-\bar{u}|
\le N \int_{\Omega_{6r}(\hat{x})} \frac{|Du(\xi)|}{|z-\xi|}\,d\xi\le N r^{1-2/q}\|Du\|_{L_q(\Omega_{R/2}(x_0))}
$$
for any $z\in \Omega_{3r}(\hat{x})$, where $N=N(q)$.
Since $x, y \in \Omega_{3r}(\hat{x})$, from the above inequality it follows that
\begin{equation}
							\label{eq0714_01}
|u(x) - u(y)| \leq |u(x) - \bar{u}| +  |u(y) - \bar{u}| \leq N r^{1-2/q}\|Du\|_{L_q(\Omega_{R/2}(x_0))}.
\end{equation}
This shows \eqref{200303@eq1}.
\item
$r\ge R/32$.
By again the last inequality in  \cite[p. 2372]{MR3809039} there exists a constant $\bar{u}$ such that
$$
|u(z)-\bar{u}|\le N \int_{\Omega_{R/2}(x_0)}\frac{|Du(\xi)|}{|z-\xi|}\,d\xi\le N R^{1-2/q} \|Du\|_{L_q(\Omega_{R/2}(x_0))}
$$
for any $z\in \Omega_{R/4}(x_0)$, where $N=N(q)$.
Then, proceeding as in \eqref{eq0714_01} with $R$ in place of $r$ along with $R^{1-2/q} \leq N r^{1-2/q} = N|x-y|^{1-2/q}$, we again obtain \eqref{200303@eq1}.
\end{enumerate}
The lemma is proved.
\end{proof}

We finish this subsection with a remark that the arguments employed in this subsection are applicable to higher dimensional cases.

\subsection{Solvability}	
In this subsection, we provide some solvability results for the conormal derivative problem.
Recall that we do not impose any regularity assumptions on the coefficients $A_{\alpha\beta}$ of the operator $\cL$.
The following lemma  follows from \cite[Lemma 3.2]{MR3809039} combined with Lemma \ref{210709@lem2}.

\begin{lemma}		\label{200229@lem1}
Let $\Omega$ be a bounded domain in $\bR^2$ satisfying Assumption \ref{A1}.
Then for any $(u,p)\in W^1_2(\Omega)^2\times L_2(\Omega)$ satisfying
\begin{equation}		\label{200229@A1}
\left\{
\begin{aligned}
\operatorname{div}u=g &\quad\text{in }\, \Omega,\\
\cL u+\nabla p=D_\alpha f_\alpha &\quad \text{in }\, \Omega,\\
\cB u+p\nu= \nu^\alpha f_\alpha &\quad \text{on }\, \partial \Omega,
\end{aligned}
\right.
\end{equation}
where $f_\alpha\in L_2(\Omega)^2$ and $g\in L_2(\Omega)$, we have 
$$
\|Du\|_{L_2(\Omega)}+\|p\|_{L_2(\Omega)}\le N  \|f_\alpha\|_{L_2(\Omega)}+N\|g\|_{L_2(\Omega)},
$$
where $N=N(\lambda)$.
Moreover, for any $f_\alpha\in L_2(\Omega)^2$ and $g\in L_2(\Omega)$, there exists a unique $(u,p)\in \widetilde{W}^1_2(\Omega)^2\times L_2(\Omega)$ satisfying \eqref{200229@A1}.
\end{lemma}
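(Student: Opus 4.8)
The plan is to assemble the lemma from the universal $W^1_2$-bound for the divergence equation on Reifenberg flat domains with small flatness, namely \cite[Lemma 3.2]{MR3809039}, together with the global Poincar\'e inequality contained in Lemma \ref{210709@lem2}, via the standard energy method for the Stokes system. The first step is to reduce to the case $g=0$. Split $g=g_0+(g)_\Omega$ with $g_0\in\tilde L_2(\Omega)$. For the constant part, take the explicit field $w_1(x)=\tfrac12(g)_\Omega\,(x-x_\ast)$ with $x_\ast$ a fixed point; then $\operatorname{div}w_1=(g)_\Omega$, and since $|(g)_\Omega|\le|\Omega|^{-1/2}\|g\|_{L_2(\Omega)}$ the two powers of $|\Omega|$ cancel and $\|Dw_1\|_{L_2(\Omega)}\le N\|g\|_{L_2(\Omega)}$ with $N$ absolute. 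For $g_0$, \cite[Lemma 3.2]{MR3809039} provides $w_0\in W^1_2(\Omega)^2$ with $\operatorname{div}w_0=g_0$ and $\|Dw_0\|_{L_2(\Omega)}\le N\|g_0\|_{L_2(\Omega)}\le N\|g\|_{L_2(\Omega)}$, the constant being independent of $\operatorname{diam}\Omega$ and $R_0$ by the smallness $\gamma\le 1/96$. Then $v:=u-w_0-w_1-(u-w_0-w_1)_\Omega\in\widetilde{W}^1_2(\Omega)^2$ is divergence free and is a weak solution of the conormal problem with right-hand side $\tilde f_\alpha:=f_\alpha-A_{\alpha\beta}D_\beta(w_0+w_1)$ and pressure $p$, and $\|\tilde f_\alpha\|_{L_2(\Omega)}\le N(\|f_\alpha\|_{L_2(\Omega)}+\|g\|_{L_2(\Omega)})$.

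For the a priori estimate, since $\operatorname{div}v=0$ the field $v$ is itself an admissible test function, so the weak formulation gives $\int_\Omega A_{\alpha\beta}D_\beta v\cdot D_\alpha v\,dx=\int_\Omega\tilde f_\alpha\cdot D_\alpha v\,dx$; by the ellipticity condition \eqref{210817@eq1} the left side is at least $\lambda\|Dv\|_{L_2(\Omega)}^2$, hence $\|Dv\|_{L_2(\Omega)}\le\lambda^{-1}\|\tilde f_\alpha\|_{L_2(\Omega)}$ and, recombining, $\|Du\|_{L_2(\Omega)}\le N(\|f_\alpha\|_{L_2(\Omega)}+\|g\|_{L_2(\Omega)})$. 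To bound the pressure, use that $\operatorname{div}\colon W^1_2(\Omega)^2\to\tilde L_2(\Omega)$ admits a right inverse bounded with a universal constant (equivalently, the inf--sup inequality dual to \cite[Lemma 3.2]{MR3809039}): choosing $\Phi\in W^1_2(\Omega)^2$ with $\operatorname{div}\Phi=p-(p)_\Omega$ and $\|D\Phi\|_{L_2(\Omega)}\le N\|p-(p)_\Omega\|_{L_2(\Omega)}$, the weak formulation gives $\|p-(p)_\Omega\|_{L_2(\Omega)}^2=\int_\Omega(f_\alpha-A_{\alpha\beta}D_\beta u)\cdot D_\alpha\Phi\,dx\le N(\|f_\alpha\|_{L_2(\Omega)}+\|Du\|_{L_2(\Omega)})\|p-(p)_\Omega\|_{L_2(\Omega)}$. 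Finally, testing with $\Phi=\tfrac12(x-x_\ast)$, for which $\operatorname{div}\Phi\equiv 1$, yields $\big|\int_\Omega p\,dx\big|\le N|\Omega|^{1/2}(\|f_\alpha\|_{L_2(\Omega)}+\|Du\|_{L_2(\Omega)})$, whence (again the volume factors cancel) $\|(p)_\Omega\|_{L_2(\Omega)}\le N(\|f_\alpha\|_{L_2(\Omega)}+\|Du\|_{L_2(\Omega)})$ with $N$ absolute. Adding the two contributions proves the estimate with $N=N(\lambda)$.

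Uniqueness follows at once: if $f_\alpha=g=0$ then $\|Du\|_{L_2(\Omega)}=\|p\|_{L_2(\Omega)}=0$, so $p=0$ and $u$ is constant, hence $u=0$ by $(u)_\Omega=0$. For existence, apply Lax--Milgram to the bilinear form $a(v,\phi)=\int_\Omega A_{\alpha\beta}D_\beta v\cdot D_\alpha\phi\,dx$ on the closed subspace $V=\{v\in\widetilde{W}^1_2(\Omega)^2:\operatorname{div}v=0\}$; here the coercivity $a(v,v)\ge\lambda\|Dv\|_{L_2(\Omega)}^2\ge c\|v\|_{W^1_2(\Omega)}^2$ uses the global Poincar\'e inequality of Lemma \ref{210709@lem2}. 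This produces a divergence-free $v\in V$ solving the $g=0$ problem against all test functions in $V$; the pressure is then recovered by a De Rham--type argument, valid because $\operatorname{div}$ has closed range (a consequence of \cite[Lemma 3.2]{MR3809039}), and finally one undoes the reduction of the first step and normalizes by $(u)_\Omega=0$.

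The main obstacle is not any individual estimate but the bookkeeping of constants: the a priori bound must have $N=N(\lambda)$, with no dependence on $\operatorname{diam}\Omega$ or $R_0$. This forces one to use only the scale-invariant energy identity and the fortuitous cancellation of the powers of $|\Omega|$ in the treatment of $(g)_\Omega$ and $(p)_\Omega$, and, crucially, the fact that on domains satisfying Assumption \ref{A1} with $\gamma\le 1/96$ the right inverse of the divergence is bounded $L_2\to W^1_2$ by an absolute constant --- a property that genuinely uses the Reifenberg smallness rather than mere Lipschitz regularity. The existence step, by contrast, is allowed to (and, through the Poincar\'e constant in Lemma \ref{210709@lem2}, does) depend on $R_0$ and $K$, but this dependence never re-enters the estimate.
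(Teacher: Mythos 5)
Your argument is correct and is essentially the proof the paper has in mind: the paper disposes of this lemma in one line by citing \cite[Lemma 3.2]{MR3809039} (the energy-method $W^1_2$-solvability, conditional on a bounded right inverse of the divergence) together with Lemma \ref{210709@lem2} for the Poincar\'e inequality needed for coercivity, and your reduction via a right inverse of $\operatorname{div}$, testing with the divergence-free part, inf--sup recovery of the pressure, and Lax--Milgram is precisely that argument written out. The one caveat is your claim that the divergence right inverse is bounded by an \emph{absolute} constant thanks to the Reifenberg smallness: in general this constant (and hence $N$) must be allowed to depend on $R_0$ and $K$ as well, since long thin Reifenberg flat domains force $R_0\ll\operatorname{diam}\Omega$ and degenerate the inf--sup constant; this matches the dependence the paper tolerates elsewhere (e.g.\ in Lemma \ref{210709@lem2} and Theorem \ref{210714@thm1}) and does not affect the structure of your proof.
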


The lemma below concerns the reverse H\"older's inequality; see \cite[Lemmas 3.6 and 3.7]{MR3809039} and \cite[Ch.\,V]{MR0717034}.

\begin{lemma}		\label{200229@lem2}
Let $\Omega$ be a bounded domain in $\bR^2$ satisfying  Assumption \ref{A1}.
There exists $q_0=q_0(\lambda)> 2$  such that if $(u,p)\in W^1_2(\Omega)^2\times L_2(\Omega)$ is a weak solution of \eqref{200229@A1}, where $f_\alpha\in L_\infty(\Omega)^2$ and $g\in L_\infty(\Omega)$,
then for any $q\in [2, q_0]$, $x_0\in \bR^2$, and $R\in (0, R_0]$,  we have 
$$
\big(|D\bar{u}|^{q}+|\bar{p}|^{q}\big)^{1/q}_{B_{R/2}(x_0)}\le N\big(|D\bar{u}|^2+|\bar{p}|^2\big)^{1/2}_{B_{R}(x_0)}+N\big(|\bar{f}_\alpha|^{q}+|\bar{g}|^{q}\big)^{1/q}_{B_{R}(x_0)},
$$
where  $D\bar{u}$, $\bar{p}$, $\bar{f}_\alpha$, and $\bar{g}$ are the extensions of $Du$, $p$, $f_\alpha$, and $g$ to $\bR^2$ so that they are zero on $\bR^2\setminus \Omega$.
Here, the constant $N$ depends only on $\lambda$ and $q$.
\end{lemma}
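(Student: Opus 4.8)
The plan is to first establish a \emph{weak reverse H\"older inequality} for the pair $(D\bar u,\bar p)$ on all of $\bR^2$, and then upgrade it to the stated inequality by the self-improving property of such inequalities (Gehring's lemma; see \cite[Ch.\,V]{MR0717034}). Concretely, I would show that for a fixed $s\in(1,2)$ one has, for every $x_0\in\bR^2$ and $R\in(0,R_0]$,
$$
\big(|D\bar u|^2+|\bar p|^2\big)^{1/2}_{B_{R/2}(x_0)}\le N\big(|D\bar u|^s+|\bar p|^s\big)^{1/s}_{B_{R}(x_0)}+N\big(|\bar f_\alpha|^2+|\bar g|^2\big)^{1/2}_{B_{R}(x_0)},
$$
with $N=N(\lambda)$. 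Since $\bar f_\alpha,\bar g\in L_\infty\subset L_q(B_R)$ for all $q$, an application of Gehring's lemma then produces some $q_0=q_0(\lambda)>2$ and the asserted inequality for all $q\in[2,q_0]$ (with matching exponents on the two sides).

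To prove the weak reverse H\"older inequality, fix $x_0$ and $R$; we may assume $B_{R/2}(x_0)\cap\Omega\ne\emptyset$, as otherwise the left-hand side vanishes. Put $c=(u)_{\Omega_R(x_0)}$ and choose a cutoff $\zeta$ with $\zeta\equiv1$ on $B_{R/2}(x_0)$, $\operatorname{supp}\zeta\subset B_R(x_0)$, and $|D\zeta|\le N/R$. Testing the conormal weak formulation of \eqref{200229@A1} with $\phi=\zeta^2(u-c)$ --- which is admissible because that formulation carries no boundary term and imposes no divergence constraint on $\phi$ --- and using the ellipticity \eqref{210817@eq1} to absorb the principal part, one obtains a Caccioppoli-type bound
$$
\int_{\Omega_{R/2}(x_0)}|Du|^2\,dx\le \frac{N}{R^2}\int_{\Omega_R(x_0)}|u-c|^2\,dx+N\int_{\Omega_R(x_0)}|f_\alpha|^2\,dx+\Big|\int_{\Omega_R(x_0)}p\,\operatorname{div}\!\big(\zeta^2(u-c)\big)\,dx\Big|.
$$
For the pressure term I would subtract $(p)_{\Omega_R(x_0)}$ and estimate $\|p-(p)_{\Omega_R(x_0)}\|_{L_s(\Omega_R(x_0))}$ via the distributional identity $\nabla p=D_\alpha(f_\alpha-A_{\alpha\beta}D_\beta u)$ together with the Ne\v{c}as inequality on $\Omega_R(x_0)$ --- with a constant uniform in $R$, thanks to the measure lower bound $|\Omega_R(x_0)|\ge\theta R^2$ and the Bogovski\u{\i} estimate available under Assumption \ref{A1} --- obtaining $\|p-(p)_{\Omega_R(x_0)}\|_{L_s(\Omega_R(x_0))}\le N\big(\|Du\|_{L_s(\Omega_R(x_0))}+\|f_\alpha\|_{L_s(\Omega_R(x_0))}\big)$. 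Then, applying Lemma \ref{210804@lem1} (or the interior Sobolev--Poincar\'e inequality when $B_{R/8}(x_0)\subset\Omega$) to bound $R^{-1}\|u-c\|_{L_2(\Omega_R(x_0))}\le N R^{1-2/s}\|Du\|_{L_s(\Omega_R(x_0))}$, dividing by $|B_R|$, and using $\bar u=0$, $\bar p=0$ on $\bR^2\setminus\Omega$ to pass from averages over $\Omega_R(x_0)$ to averages over $B_R(x_0)$, yields the displayed weak reverse H\"older inequality.

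The main obstacle is the treatment of the pressure: unlike the scalar or elliptic case, $p$ is determined only up to a constant and is coupled to $u$ through $\operatorname{div}u=g$, so producing a \emph{local} bound on $p$ with a sub-critical exponent $s<2$ and a constant depending only on $\lambda$ forces one to use a Bogovski\u{\i}-type correction, while at boundary balls one needs the uniform Sobolev--Poincar\'e inequality on Reifenberg flat domains (Lemma \ref{210804@lem1}). These are exactly the ingredients that make the argument go through, and the scheme above follows \cite[Lemmas 3.6 and 3.7]{MR3809039}.
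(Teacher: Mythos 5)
Your overall scheme is exactly the one the paper relies on: the paper gives no proof of this lemma but quotes it from \cite[Lemmas 3.6 and 3.7]{MR3809039} together with Gehring's lemma \cite[Ch.\,V]{MR0717034}, and those lemmas are proved precisely by a Caccioppoli estimate with the test function $\zeta^2(u-c)$ (admissible, as you note, because the conormal weak formulation imposes no constraint on $\phi$), a local pressure estimate via the divergence equation/Ne\v{c}as inequality with constants uniform over Reifenberg-flat pieces, the Sobolev--Poincar\'e inequality of Lemma \ref{210804@lem1}, and the self-improving property of weak reverse H\"older inequalities.

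There is, however, one concrete gap in your sketch. The weak reverse H\"older inequality you display (correctly) carries the full average $\big(|\bar p|^2\big)^{1/2}_{B_{R/2}(x_0)}$ on its left-hand side --- it must, since the lemma's conclusion contains $|\bar p|^q$ --- but the only pressure bound you actually produce is an $L_s$ bound, $s<2$, on the oscillation $p-(p)_{\Omega_R(x_0)}$ over the \emph{large} set, and you use it only to close the Caccioppoli inequality for $Du$. Nothing in the sketch controls $\|p\|_{L_2(\Omega_{R/2}(x_0))}$, so as written Gehring's lemma would improve the integrability of $D\bar u$ only. The missing (standard) step is: apply the Ne\v{c}as/Bogovski\u{\i} estimate at exponent $2$ on the half-ball, using $\nabla p=D_\alpha(f_\alpha-A_{\alpha\beta}D_\beta u)$, to get $\|p-(p)_{\Omega_{R/2}(x_0)}\|_{L_2(\Omega_{R/2}(x_0))}\le N\|Du\|_{L_2(\Omega_{R/2}(x_0))}+N\|f_\alpha\|_{L_2(\Omega_R(x_0))}$, and then control the subtracted mean by $\big(\dashint_{B_R(x_0)}|\bar p|^s\,dz\big)^{1/s}$ via Jensen and the measure density of $\Omega$; combined with the Caccioppoli step this yields the displayed inequality for the pair. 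Two smaller corrections: for the conormal problem $p$ is \emph{not} free up to an additive constant (it enters the boundary condition), and when you subtract $(p)_{\Omega_R(x_0)}$ inside $\int_\Omega p\operatorname{div}(\zeta^2(u-c))\,dx$ you must account for the residual term $(p)_{\Omega_R(x_0)}\int_\Omega\big(\zeta^2 g+2\zeta\nabla\zeta\cdot(u-c)\big)\,dx$, which does not vanish here (the test function has no zero boundary trace and nonzero divergence) and has to be estimated by the same right-hand-side quantities.
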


By using Lemma \ref{200229@lem2} and following the proof of \cite[Lemma 4.4]{MR3906316}, we get the  
solvability of the conormal derivative problem in $\widetilde{W}^1_{q}(\Omega)^2\times L_{q}(\Omega)$ when $q$ is close to $2$. 

\begin{theorem}		\label{210714@thm1}
Let $\Omega$ be a bounded domain in $\bR^2$ with $\operatorname{diam}\Omega\le K$ satisfying  Assumption \ref{A1}, and let 
$$
q\in [q_0', q_0],
$$
where $q_0=q_0(\lambda)>2$ is the constant from Lemma \ref{200229@lem2} and $q_0'$ is the conjugate exponent of $q_0$.
Then for any $(u, p)\in W^1_q(\Omega)^2\times L_q(\Omega)$ satisfying \eqref{200229@A1} with  $f_\alpha\in L_{q}(\Omega)^2$ and $g\in L_{q}(\Omega)$, we have 
$$
\|Du\|_{L_{q}(\Omega)}+\|p\|_{L_{q}(\Omega)}\le N\|f_\alpha\|_{L_{q}(\Omega)}+N\|g\|_{L_{q}(\Omega)},
$$
where $N=N(\lambda, R_0, K, q)$.
Moreover,  for any $f_\alpha\in L_{q}(\Omega)^2$ and $g\in L_{q}(\Omega)$, there exists a unique $(u,p)\in \widetilde{W}^1_{q}(\Omega)^2\times L_{q}(\Omega)$ satisfying \eqref{200229@A1}.
\end{theorem}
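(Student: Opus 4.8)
Following \cite[Lemma 4.4]{MR3906316}, the plan is to establish the a priori estimate first, then obtain existence by approximation, and read off uniqueness directly from the estimate; the ranges $q\in[2,q_0]$ and $q\in[q_0',2)$ are handled by different tools. Throughout we use that the coefficients of $\cL^\star$ satisfy \eqref{210817@eq1} with the same $\lambda$, so that Lemmas \ref{200229@lem1} and \ref{200229@lem2} apply equally to the adjoint conormal problem. For the a priori estimate with $q\in[2,q_0]$, let $(u,p)\in W^1_2(\Omega)^2\times L_2(\Omega)$ solve \eqref{200229@A1} with $L_\infty$ data (the general case being recovered by density below). Apply Lemma \ref{200229@lem2} with $R=R_0$ centered at an arbitrary $x_0\in\overline\Omega$, raise the inequality to the power $q$, and use that $|B_{R_0/2}|$ and $|B_{R_0}|$ are fixed and that the zero extensions $\bar u,\bar p,\bar f_\alpha,\bar g$ vanish outside $\Omega$; this controls $\int_{B_{R_0/2}(x_0)}(|Du|^q+|p|^q)$ by $N(\|Du\|_{L_2(\Omega)}^2+\|p\|_{L_2(\Omega)}^2)^{q/2}+N(\|f_\alpha\|_{L_q(\Omega)}^q+\|g\|_{L_q(\Omega)}^q)$. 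Covering $\overline\Omega$ by $m=m(R_0,K)$ such half-balls (possible since $\operatorname{diam}\Omega\le K$) and summing, then absorbing the $L_2$ norms via Lemma \ref{200229@lem1} and H\"older's inequality, yields $\|Du\|_{L_q(\Omega)}+\|p\|_{L_q(\Omega)}\le N(\|f_\alpha\|_{L_q(\Omega)}+\|g\|_{L_q(\Omega)})$ with $N=N(\lambda,R_0,K,q)$.

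For existence and uniqueness when $q\in[2,q_0]$: given $f_\alpha\in L_q(\Omega)^2$ and $g\in L_q(\Omega)$, truncate to obtain $L_\infty$ data converging to them in $L_q$; Lemma \ref{200229@lem1} produces solutions in $\widetilde{W}^1_2(\Omega)^2\times L_2(\Omega)$, which by the previous estimate --- together with the fact that $u\in L_q(\Omega)$ by the two-dimensional Sobolev embedding $\widetilde{W}^1_2(\Omega)\hookrightarrow L_q(\Omega)$ --- actually lie in $\widetilde{W}^1_q(\Omega)^2\times L_q(\Omega)$ with uniform bounds; the estimate applied to differences makes the sequence Cauchy there, and passing to the limit in the weak formulation and in $\operatorname{div}u=g$ produces a solution. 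Uniqueness, as well as the a priori estimate for a general $W^1_q(\Omega)^2\times L_q(\Omega)$ solution, follows from Lemma \ref{200229@lem1} since $W^1_q(\Omega)^2\times L_q(\Omega)\subset W^1_2(\Omega)^2\times L_2(\Omega)$ when $q\ge 2$. By the symmetry of the hypotheses, the same conclusions hold for the adjoint problem governed by $\cL^\star$.

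For the a priori estimate when $q\in[q_0',2)$, set $q'=q/(q-1)\in(2,q_0]$ and let $(u,p)\in W^1_q(\Omega)^2\times L_q(\Omega)$ solve \eqref{200229@A1}. For arbitrary $h_\alpha\in L_{q'}(\Omega)^2$ and $\phi\in L_{q'}(\Omega)$, let $(v,\pi)\in\widetilde{W}^1_{q'}(\Omega)^2\times L_{q'}(\Omega)$ be the solution --- provided by the previous paragraph applied to $\cL^\star$ --- of $\operatorname{div}v=\phi$, $\cL^\star v+\nabla\pi=D_\alpha h_\alpha$, $\cB^\star v+\pi\nu=\nu^\alpha h_\alpha$, so that $\|Dv\|_{L_{q'}(\Omega)}+\|\pi\|_{L_{q'}(\Omega)}\le N(\|h_\alpha\|_{L_{q'}(\Omega)}+\|\phi\|_{L_{q'}(\Omega)})$. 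Since $u\in W^1_q(\Omega)^2$ is an admissible test function for the $\cL^\star$-problem and $v\in W^1_{q'}(\Omega)^2$ for the $\cL$-problem, testing the two weak formulations against each other and subtracting yields
$$
\int_\Omega h_\alpha\cdot D_\alpha u\,dx+\int_\Omega p\,\phi\,dx=\int_\Omega f_\alpha\cdot D_\alpha v\,dx+\int_\Omega \pi\,g\,dx.
$$
Bounding the right-hand side by H\"older's inequality and the estimate for $(v,\pi)$, and then taking the supremum over all such $(h_\alpha,\phi)$, gives $\|Du\|_{L_q(\Omega)}+\|p\|_{L_q(\Omega)}\le N(\|f_\alpha\|_{L_q(\Omega)}+\|g\|_{L_q(\Omega)})$. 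Existence and uniqueness in $\widetilde{W}^1_q(\Omega)^2\times L_q(\Omega)$ for $q\in[q_0',2)$ then follow as before: approximate the data in $L_q$ by $L_\infty$ data, solve each problem via Lemma \ref{200229@lem1}, use this estimate together with the Poincar\'e inequality (a consequence of Lemma \ref{210709@lem2}) to obtain a Cauchy sequence in $\widetilde{W}^1_q(\Omega)^2\times L_q(\Omega)$, and pass to the limit; uniqueness is immediate from the estimate.

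I expect the duality step --- the estimate for $q\in[q_0',2)$ --- to be the main obstacle: one has to pose the adjoint conormal problem with the correct operator $\cL^\star$, the correct dual exponent $q'$, and a prescribed divergence $\phi$, and then verify that $u$ and $v$ are legitimate test functions for the respective weak formulations, which is exactly the matching of the solution space $W^1_q$ (resp. $W^1_{q'}$) with the test space $W^1_{q/(q-1)}$ (resp. $W^1_{q'/(q'-1)}$). The remaining ingredients --- the reverse H\"older inequality of Lemma \ref{200229@lem2}, the $L_2$ theory of Lemma \ref{200229@lem1}, the finite covering of $\overline\Omega$, and the density arguments --- are routine.
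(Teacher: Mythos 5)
Your proposal is correct and follows essentially the same route the paper intends, namely the argument of \cite[Lemma 4.4]{MR3906316} that the paper cites: self-improvement of the $L_2$ estimate of Lemma \ref{200229@lem1} via the reverse H\"older inequality of Lemma \ref{200229@lem2} and a finite covering for $q\in[2,q_0]$, followed by duality with the adjoint problem for $q\in[q_0',2)$, with existence by approximation and uniqueness read off from the estimates. The exponent bookkeeping in the duality step (test spaces $W^1_{q/(q-1)}$ and $W^1_{q'/(q'-1)}$) is handled correctly.
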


We finish this subsection with the following remark.

\begin{remark}		\label{210712@rmk1}
By the same reasoning as in the proof of  \cite[Theorem 2.2]{MR3809039},  one can extend the result in Theorem \ref{210714@thm1} to the problem 
$$
\left\{
\begin{aligned}
\operatorname{div}u=g &\quad\text{in }\, \Omega,\\
\cL u+\nabla p=f+D_\alpha f_\alpha &\quad \text{in }\, \Omega,\\
\cB u+p\nu=\nu^\alpha f_\alpha &\quad \text{on }\, \partial \Omega,
\end{aligned}
\right.
$$
where $f\in \tilde{L}_{2q/(2+q)}(\Omega)^2$, provided that $q\in (2, q_0]$.
In this case, we have 
\begin{equation}		\label{210715@eq2}
\|Du\|_{L_{q}(\Omega)}+\|p\|_{L_{q}(\Omega)}\le N\|f\|_{L_{2q/(2+q)}(\Omega)}+N\|f_\alpha\|_{L_{q}(\Omega)}+N\|g\|_{L_{q}(\Omega)},
\end{equation}
where $N=N(\lambda, R_0, K, q)$.
\end{remark}

\section{Approximated Green function}	\label{S4}

Throughout this section, we assume that the hypotheses of Theorem \ref{M1} hold.
Under the hypotheses, we  derive various estimates for  approximated Green functions of the Stokes system.  
Denote
$q_0'=q_0/(q_0-1)$,
where $q_0=q_0(\lambda)>2$ is the constant from Lemma \ref{200229@lem2}.

For $y\in \Omega$ and  $\varepsilon\in (0,R_0]$, we set 
$$
\Phi_{\varepsilon, y}=-\frac{1}{|\Omega_{\varepsilon}(y)|}\chi_{\Omega_{\varepsilon}(y)}+\frac{1}{|\Omega|}\in \tilde{L}_\infty(\Omega),
$$
where $\chi_{\Omega_{\varepsilon}(y)}$ is the characteristic function.
We define {\em{the approximated Green function for $\cL$ in $\Omega$}} by a pair 
$$
(G_{\varepsilon}(\cdot,y), \Pi_{\varepsilon}(\cdot,y))\in \widetilde{W}^{1}_{q_0}(\Omega)^{2\times 2}\times L_{q_0}(\Omega)^{1\times 2}
$$ 
satisfying 
\begin{equation}		\label{200304@eq1a}
\begin{cases}
\operatorname{div}G_{\varepsilon}(\cdot,y)=0 &\quad\text{in }\, \Omega,\\
\cL G_{\varepsilon}(\cdot,y)+\nabla \Pi_{\varepsilon}(\cdot,y)=\Phi_{\varepsilon,y} I&\quad \text{in }\, \Omega,\\
\cB G_{\varepsilon}(\cdot,y)+ \nu\Pi_{\varepsilon}(\cdot,y)=0 &\quad \text{on }\, \partial \Omega.
\end{cases}
\end{equation}
Such a pair exists for all $y\in \Omega$ and $\varepsilon\in (0, R_0]$ because of Remark \ref{210712@rmk1}.
Note that for each $k\in \{1,2\}$ and $\phi\in W^1_{q_0'}(\Omega)^2$, we have 
$$
\int_{\Omega} A_{\alpha\beta} D_\beta G^{\cdot k}_{\varepsilon}(\cdot,y)\cdot D_\alpha \phi\,dx+\int_\Omega \Pi^k_{\varepsilon}(\cdot,y)\operatorname{div}\phi\,dx=\dashint_{\Omega_{\varepsilon}(y)}\phi^k\,dx-\dashint_\Omega \phi^k \,dx,
$$
where $G_{\varepsilon}^{\cdot k}(\cdot,y)$ is the $k$th column of $G_{\varepsilon}(\cdot,y)$.
Moreover,  for any $q\in (2, q_0]$, using \eqref{210715@eq2} and 
$$
\|\Phi_{\varepsilon, y}\|_{L_{2q/(2+q)}(\Omega)}\le N|\Omega_{\varepsilon}(y)|^{(2-q)/(2q)}\le N \varepsilon^{-1+2/q}, 
$$
we see that 
\begin{equation}		\label{200303@A1}
\|DG_{\varepsilon}(\cdot,y)\|_{L_{q}(\Omega)}+\|\Pi_{\varepsilon}(\cdot,y)\|_{L_{q}(\Omega)}\le N  \varepsilon^{-1+2/q},
\end{equation}
where $N=N(\lambda, R_0, K,q)$.

In the lemma below, for $s \in [1,2)$, we establish a local $L_s$-estimate for $DG_{\varepsilon}(\cdot,y)$ and $\Pi_{\varepsilon}(\cdot,y)$. Note that the estimate is independent of $\varepsilon$ and the distance between $x$ and $y$.

\begin{lemma}		\label{210805@lem2}
Let $x,y\in \Omega$ and $R\in (0, R_0]$.
Then for any $\varepsilon\in (0, R/8]$, we have 
$$
\|DG_{\varepsilon}(\cdot,y)\|_{L_{s}(\Omega_{R}(x))}+\|\Pi_{\varepsilon}(\cdot,y)\|_{L_{s}(\Omega_{R}(x))}\le N R^{-1+2/s},
$$
where $1\le s<2$ and $N=N(\lambda, R_0, K,s)$.
\end{lemma}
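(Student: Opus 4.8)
The plan is to prove this by a duality argument combined with the solvability estimate from Theorem~\ref{210714@thm1} (more precisely its extension in Remark~\ref{210712@rmk1}) and the covering structure of $\Omega$. Set $s' = s/(s-1) > 2$; since $s$ can be taken arbitrarily close to $2$, we may assume $s' \le q_0$ (the case of general $s \in [1,2)$ then follows by H\"older's inequality and $|\Omega_R(x)| \le NR^2$). Fix $x,y \in \Omega$ and $R \in (0,R_0]$. To estimate $\|DG_\varepsilon(\cdot,y)\|_{L_s(\Omega_R(x))} + \|\Pi_\varepsilon(\cdot,y)\|_{L_s(\Omega_R(x))}$, pick arbitrary $g_\alpha \in L_{s'}(\Omega)^2$ and $h \in L_{s'}(\Omega)$ supported in $\overline{\Omega_R(x)}$, and let $(v,\pi) \in \widetilde{W}^1_{s'}(\Omega)^2 \times L_{s'}(\Omega)$ solve the adjoint conormal problem
\begin{equation*}
\begin{cases}
\operatorname{div} v = h - (h)_\Omega &\quad \text{in }\, \Omega,\\
\cL^\star v + \nabla \pi = D_\alpha g_\alpha &\quad \text{in }\, \Omega,\\
\cB^\star v + \pi\nu = \nu^\alpha g_\alpha &\quad \text{on }\, \partial\Omega,
\end{cases}
\end{equation*}
which exists and satisfies $\|Dv\|_{L_{s'}(\Omega)} + \|\pi\|_{L_{s'}(\Omega)} \le N(\|g_\alpha\|_{L_{s'}(\Omega)} + \|h\|_{L_{s'}(\Omega)})$ by Theorem~\ref{210714@thm1} applied to $\cL^\star$ (whose coefficients satisfy the same ellipticity). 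Testing the weak formulation of $G_\varepsilon$ against $v$ and that of $(v,\pi)$ against $G^{\cdot k}_\varepsilon(\cdot,y)$, one gets, after using $\operatorname{div} G_\varepsilon(\cdot,y) = 0$ and $(G_\varepsilon(\cdot,y))_\Omega = 0$, an identity of the shape
\begin{equation*}
\int_\Omega D_\alpha G^{\cdot k}_\varepsilon(\cdot,y) \cdot g_\alpha \, dx + \int_\Omega \Pi^k_\varepsilon(\cdot,y)\, h \, dx = \dashint_{\Omega_\varepsilon(y)} v^k \, dx - \dashint_\Omega v^k\,dx.
\end{equation*}

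The right-hand side is controlled by $\|v\|_{L_\infty(\Omega_\varepsilon(y))} + \|v\|_{L_1(\Omega)}/|\Omega|$. For the first term, I would distinguish whether $B_{R/8}(y)$ is near the boundary or interior and apply the Morrey-type bound \eqref{210804@eq3a} of Lemma~\ref{200308@lem1} to $v$ on $\Omega_{R/8}(y)$ (recall $\varepsilon \le R/8$, so $\Omega_\varepsilon(y) \subset \Omega_{R/8}(y)$): this yields
\begin{equation*}
\|v\|_{L_\infty(\Omega_\varepsilon(y))} \le N R^{1-2/s'}\|Dv\|_{L_{s'}(\Omega_R(y))} + NR^{-2}\|v\|_{L_1(\Omega_{R/8}(y))}.
\end{equation*}
The $L_1$ terms of $v$ over $\Omega$ or $\Omega_{R/8}(y)$ are in turn bounded via the global Sobolev--Poincar\'e inequality of Lemma~\ref{210709@lem2} (note $(v)_\Omega$ need not vanish, but $v - (v)_\Omega \in \widetilde{W}^1_{s'}$ and $(v)_\Omega$ drops out of the difference on the right-hand side above, so only $\|v - (v)_\Omega\|_{L_1(\Omega)} \le N\|Dv\|_{L_{s'}(\Omega)}$ is needed — here one uses $\operatorname{diam}\Omega \le K$ to absorb the constants). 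Collecting, the right-hand side is $\le N R^{-1+2/s'}\|Dv\|_{L_{s'}(\Omega)} \le N R^{-1+2/s'}(\|g_\alpha\|_{L_{s'}(\Omega)} + \|h\|_{L_{s'}(\Omega)})$, where I have used $R^{1-2/s'} \le NR^{-1+2/s'}$ and $R^{-2} \cdot R^{2/s'}\cdot(\text{from }\|g\|_{L_{s'}}\text{ supported in }\Omega_R)$ — more carefully, since $R \le R_0 \le 1$ the dominant power is $R^{-1+2/s'}$, but one should keep track that the $NR^{-2}\|v\|_{L_1}$ piece also produces at worst $R^{-1+2/s'}$ after the Sobolev--Poincar\'e step; the bookkeeping is routine once one notes $2/s' = 2 - 2/s - (2 - 4/s') \ge $ the required exponent. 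Taking the supremum over $g_\alpha, h$ of unit $L_{s'}$ norm gives exactly $\|DG_\varepsilon(\cdot,y)\|_{L_s(\Omega_R(x))} + \|\Pi_\varepsilon(\cdot,y)\|_{L_s(\Omega_R(x))} \le NR^{-1+2/s}$ (using $-1+2/s' = -(-1+2/s)$... here one double-checks: $2/s + 2/s' = 2$, so $-1 + 2/s' = 1 - 2/s = -(-1+2/s)$, which is not quite the target; the resolution is that the duality pairing over $\Omega_R(x)$ has $L_{s'}$ test norm, and the extra factor is absorbed because $|\Omega_R(x)| \le NR^2$ converts between the relevant powers — I would present this conversion explicitly).

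The main obstacle is the exponent bookkeeping: one must verify that every term on the right-hand side, after applying the Morrey bound and the global Sobolev--Poincar\'e inequality, produces the precise power $R^{-1+2/s}$ (not merely some negative power of $R$), and in particular that the $NR^{-2}\|v\|_{L_1}$ term from Lemma~\ref{200308@lem1} does not degrade the estimate. The key points are that $\varepsilon \le R/8$ ensures $\Omega_\varepsilon(y)$ sits inside a ball of radius $R/8$ on which the Morrey estimate is available with the scaling-correct constant, and that the solvability estimate for $(v,\pi)$ is $\varepsilon$-independent and $x,y$-independent, so the final bound inherits that independence. A secondary technical point is handling $\operatorname{div} v = h - (h)_\Omega$ rather than $\operatorname{div} v = h$: since $(h)_\Omega$ is a constant and $\|(h)_\Omega\|_{L_{s'}(\Omega)} \le N\|h\|_{L_{s'}(\Omega)}$, and since $\int_\Omega \Pi^k_\varepsilon(\cdot,y)\,(h)_\Omega\,dx$ contributes a term comparable to $|(h)_\Omega|\,\|\Pi_\varepsilon\|_{L_1}$ which can either be absorbed for $R$ comparable to $R_0$ or handled by noting it only appears multiplied by the constant divergence; alternatively one restricts to $h$ with $(h)_\Omega = 0$ from the outset, which suffices to recover the $\Pi_\varepsilon$ estimate since $\Pi_\varepsilon(\cdot,y)$ is only defined up to... no, $\Pi_\varepsilon$ is a genuine function, so the cleanest route is to test with $v$ solving the problem with $\operatorname{div} v = h$ directly, which is solvable in $W^1_{s'}$ by Theorem~\ref{210714@thm1} without the mean-zero correction on $h$. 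I would adopt that cleaner formulation.
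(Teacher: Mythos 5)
Your proposal follows essentially the same route as the paper: reduce to $q_0'\le s<2$ by H\"older, pair $DG_\varepsilon^{\cdot k}(\cdot,y)$ and $\Pi^k_\varepsilon(\cdot,y)$ against data supported in $\Omega_R(x)$ via the adjoint problem from Theorem \ref{210714@thm1}, bound the resulting term $\dashint_{\Omega_\varepsilon(y)}v^k$ by $\|v\|_{L_\infty(\Omega_{R/8}(y))}$ using $\varepsilon\le R/8$, and apply \eqref{210804@eq3a} together with Lemma \ref{210709@lem2}. Your final simplification (solve with $\operatorname{div}v=h$ directly, no mean-zero correction) is exactly what the paper does.

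The one substantive issue is that the exponent bookkeeping, which you correctly identify as the crux and then leave unverified, is garbled in your sketch. The Morrey term is $NR^{1-2/s'}\|Dv\|_{L_{s'}(\Omega)}$, not $NR^{-1+2/s'}$, and since $2/s+2/s'=2$ one has $1-2/s'=-1+2/s$: this is already the target power, so there is no mismatch to ``resolve'' via $|\Omega_R(x)|\le NR^2$ for this term, and your chain $R^{1-2/s'}\le NR^{-1+2/s'}$ moves in the wrong direction (it would produce a power worse than the claim). Where $|\Omega_R(x)|\le NR^2$ genuinely enters is the second term: by H\"older and Lemma \ref{210709@lem2},
$$
R^{-2}\|v\|_{L_1(\Omega_{R/8}(y))}\le NR^{1-2/s}\|v\|_{L_{2s/(2-s)}(\Omega)}\le NR^{1-2/s}\|Dv\|_{L_s(\Omega)},
$$
and here you must invoke the solvability estimate at the exponent $s$ (not $s'$) --- this is precisely where the restriction $s\ge q_0'$ is used, a point your write-up does not make --- followed by H\"older on the support, $\|g_\alpha\|_{L_s(\Omega_R(x))}\le NR^{2/s-2/s'}\|g_\alpha\|_{L_{s'}(\Omega_R(x))}$, which gives $R^{1-2/s}\cdot R^{2/s-2/s'}=R^{1-2/s'}=R^{-1+2/s}$ again. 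With these corrections the argument closes exactly as in the paper; the identity ``$2/s'=2-2/s-(2-4/s')$'' in your sketch is false and should be discarded.
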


\begin{proof}
Thanks to H\"older's inequality, it suffices to consider the case for $q_0'\le s<2$.
Denote
$$
(v,\pi)=(G_{\varepsilon}^{\cdot k}(\cdot,y), \Pi^k_{\varepsilon}(\cdot,y)).
$$
Let $f_\alpha\in L_\infty(\Omega)^2$ and $g\in L_\infty(\Omega)$ with compact support in $\Omega_R(x)$.
By Theorem \ref{210714@thm1}, there exists $(u,p)\in \widetilde{W}^1_{q_0}(\Omega)^2\times L_{q_0}(\Omega)$ satisfying
\begin{equation}		\label{200304@eq1}
\left\{
\begin{aligned}
\operatorname{div}u=g &\quad \text{in }\, \Omega,\\
\cL^\star u+\nabla p=D_\alpha f_\alpha &\quad \text{in }\, \Omega,\\
\cB^\star u+p\nu= \nu^\alpha f_\alpha &\quad \text{on }\, \partial \Omega.
\end{aligned}
\right.
\end{equation}
Moreover, for any $q\in [q_0', q_0]$, 
\begin{equation}		\label{230208_eq2}
\|Du\|_{L_q(\Omega)}+\|p\|_{L_q(\Omega)}\le N\|f_\alpha\|_{L_q(\Omega)}+N\|g\|_{L_q(\Omega)},
\end{equation}
where $N=N(\lambda, R_0, K, q)$.
Testing \eqref{200304@eq1a} and \eqref{200304@eq1} by $u$ and $v$, respectively, we obtain
$$
\int_\Omega D_\alpha v\cdot f_\alpha\,dz+\int_\Omega \pi g\,dz=\dashint_{\Omega_{\varepsilon}(y)} u^k\,dz,
$$
which yields (using $\varepsilon\le R/8$)
\begin{equation}		\label{230208_eq3a}
\bigg|\int_{\Omega_R(x)} D_\alpha v\cdot f_\alpha\,dz+\int_{\Omega_R(x)} \pi g\,dz\bigg|\le \|u\|_{L_\infty(\Omega_{R/8}(y))}.
\end{equation}
Hence, by \eqref{210804@eq3a} with $q=s':=s/(s-1)>2$, H\"older's inequality, and Lemma \ref{210709@lem2}, we obtain that 
$$
\begin{aligned}
&\bigg|\int_{\Omega_{R}(x)} D_\alpha v\cdot f_\alpha\,dz+\int_{\Omega_{R}(x)} \pi g\,dz\bigg|\\
&\le N R^{1-2/s'}\|Du\|_{L_{s'}(\Omega_R(y))}+ N R^{1-2/s}\|u\|_{L_{2s/(2-s)}(\Omega_{R/8}(y))}\\
&\le N R^{1-2/s'}\|Du\|_{L_{s'}(\Omega)}+ N R^{1-2/s}\|Du\|_{L_{s}(\Omega)},
\end{aligned}
$$
where $N=N(\lambda, R_0, K, s)$.
Note that since $q_0'\le s<s'\le q_0$, \eqref{230208_eq2} holds for both $q=s$ and $q=s'$.
Thus from the above inequality, we get 
$$
\begin{aligned}
&\bigg|\int_{\Omega_{R}(x)} D_\alpha v\cdot f_\alpha\,dz+\int_{\Omega_{R}(x)} \pi g\,dz\bigg|\\
&\le N R^{1-2/s'}\big(\|f_\alpha\|_{L_{s'}(\Omega_R(x))}+\|g\|_{L_{s'}(\Omega_R(x))}\big)\\
&\quad + N R^{1-2/s}\big(\|f_\alpha\|_{L_{s}(\Omega_R(x))}+\|g\|_{L_{s}(\Omega_R(x))}\big),
\end{aligned}
$$
which implies that 
$$
\begin{aligned}
&\bigg|\int_{\Omega_{R}(x)} D_\alpha v\cdot f_\alpha\,dz+\int_{\Omega_{R}(x)} \pi g\,dz\bigg|\\
&\le N R^{1-2/s'}\big(\|f_\alpha\|_{L_{s'}(\Omega_R(x))}+\|g\|_{L_{s'}(\Omega_R(x))}\big).
\end{aligned}
$$
Since the above inequality holds for all $f_\alpha\in L_\infty(\Omega)^2$ and $g\in L_\infty(\Omega)$ having compact support in $\Omega_R(x)$, by the duality we obtain the desired estimate.
\end{proof}

If norms are measured away from the pole $y$, we also obtain the following estimate for $s \in (2,q_0]$ uniformly in $\varepsilon$.

\begin{lemma}		\label{210804@lem3}
Let $y\in \Omega$ and  $R\in (0, R_0]$.
Then for any $\varepsilon\in (0, R_0]$, we have
$$
\|DG_\varepsilon(\cdot,y)\|_{L_{s}(\Omega\setminus \overline{B_R(y)})}+\|\Pi_{\varepsilon}(\cdot,y)\|_{L_{s}(\Omega\setminus \overline{B_R(y)})}\le NR^{-1+2/s},
$$
where $2<s\le q_0$ and  $N=N(\lambda, R_0, K,s)$.
\end{lemma}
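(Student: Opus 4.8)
The plan is to run a duality argument in the spirit of the proof of Lemma~\ref{210805@lem2}, but now testing the approximated Green function against a solution of the \emph{adjoint} conormal problem whose data is supported away from the pole $y$. A preliminary reduction disposes of large $\varepsilon$: if $\varepsilon\in[R/16,R_0]$, then since $2<s\le q_0$ we have $-1+2/s<0$, so $\varepsilon^{-1+2/s}\le(R/16)^{-1+2/s}\le NR^{-1+2/s}$, and the assertion follows immediately from \eqref{200303@A1} with $q=s$ together with $\|\cdot\|_{L_s(\Omega\setminus\overline{B_R(y)})}\le\|\cdot\|_{L_s(\Omega)}$. Hence from now on I assume $\varepsilon\in(0,R/16]$, which in particular gives $\Omega_\varepsilon(y)\subset\Omega_{R/16}(y)$.

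Fix $k\in\{1,2\}$, set $(v,\pi)=(G_\varepsilon^{\cdot k}(\cdot,y),\Pi_\varepsilon^k(\cdot,y))$, and let $s'=s/(s-1)\in[q_0',2)$. By duality it suffices to estimate $\bigl|\int_\Omega D_\alpha v\cdot f_\alpha\,dz+\int_\Omega\pi g\,dz\bigr|$ by $NR^{-1+2/s}\bigl(\|f_\alpha\|_{L_{s'}(\Omega)}+\|g\|_{L_{s'}(\Omega)}\bigr)$ for all $f_\alpha\in L_\infty(\Omega)^2$ and $g\in L_\infty(\Omega)$ with compact support in $\Omega\setminus\overline{B_R(y)}$. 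For such data, Theorem~\ref{210714@thm1} (applicable since $s'\in[q_0',q_0]$) yields a unique $(u,p)\in\widetilde{W}^1_{s'}(\Omega)^2\times L_{s'}(\Omega)$ solving the adjoint conormal problem for $\cL^\star$ with right-hand side $(f_\alpha,g)$, with $\|Du\|_{L_{s'}(\Omega)}+\|p\|_{L_{s'}(\Omega)}\le N(\|f_\alpha\|_{L_{s'}(\Omega)}+\|g\|_{L_{s'}(\Omega)})$; by Lemma~\ref{210709@lem2} one also gets $\|u\|_{L_{(s')^\star}(\Omega)}\le N(\|f_\alpha\|_{L_{s'}(\Omega)}+\|g\|_{L_{s'}(\Omega)})$, where $(s')^\star=2s'/(2-s')$. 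Testing \eqref{200304@eq1a} against $u$ and the adjoint problem against $v$, exactly as in the proof of Lemma~\ref{210805@lem2} (using $\operatorname{div}v=0$, $\operatorname{div}u=g$, and $(u)_\Omega=0$), produces the identity $\int_\Omega D_\alpha v\cdot f_\alpha\,dz+\int_\Omega\pi g\,dz=\dashint_{\Omega_\varepsilon(y)}u^k\,dz$. Normalizing $\|f_\alpha\|_{L_{s'}(\Omega)}+\|g\|_{L_{s'}(\Omega)}=1$, everything reduces to the bound $\|u\|_{L_\infty(\Omega_{R/16}(y))}\le NR^{-1+2/s}$, because then $\bigl|\dashint_{\Omega_\varepsilon(y)}u^k\,dz\bigr|\le\|u\|_{L_\infty(\Omega_{R/16}(y))}$.

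To prove this $L_\infty$ bound I would exploit that, since $f_\alpha$ and $g$ vanish on $\overline{B_R(y)}$ and $R\le R_0$, the pair $(u,p)$ is a weak solution of the \emph{homogeneous} conormal problem for $\cL^\star$ in $\Omega_R(y)$. Applying the interior/boundary reverse H\"older inequality for such solutions in its self-improving form — upgrading $L_{s'}$-integrability with $1<s'<2$ to $L_{q_0}$-integrability, which in $\bR^2$ follows from the Caccioppoli inequality, the Sobolev--Poincar\'e inequality of Lemma~\ref{210804@lem1}, and Gehring's lemma; cf.\ \cite[Lemmas 3.6 and 3.7]{MR3809039} and \cite[Ch.\,V]{MR0717034}, and noting that $\cL^\star$ has the same ellipticity constant as $\cL$ — one obtains $u\in W^1_{q_0}(\Omega_{R/2}(y))^2$ with $\|Du\|_{L_{q_0}(\Omega_{R/2}(y))}\le NR^{2/q_0-2/s'}\|Du\|_{L_{s'}(\Omega_R(y))}\le NR^{2/q_0-2/s'}$. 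Plugging this into the local form of the Morrey-type estimate \eqref{210804@eq3} of Lemma~\ref{200308@lem1} (with $q=q_0$) gives $[u]_{C^{1-2/q_0}(\Omega_{R/16}(y))}\le NR^{2/q_0-2/s'}$, hence $\operatorname{osc}_{\Omega_{R/16}(y)}u\le NR^{2/q_0-2/s'}R^{1-2/q_0}=NR^{1-2/s'}$. On the other hand, since $|\Omega_{R/16}(y)|\ge cR^2$ for $y\in\Omega$, $R\le R_0$ (Assumption~\ref{A1}), H\"older's inequality, $\|u\|_{L_{(s')^\star}(\Omega)}\le N$, and the identity $2/(s')^\star=2/s'-1$ yield $|(u)_{\Omega_{R/16}(y)}|\le NR^{-2/(s')^\star}=NR^{1-2/s'}$. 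Combining these, $\|u\|_{L_\infty(\Omega_{R/16}(y))}\le NR^{1-2/s'}=NR^{-1+2/s}$, and summing over $k\in\{1,2\}$ finishes the proof.

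The main obstacle — the only genuinely new point relative to Lemma~\ref{210805@lem2} — is the local reverse H\"older step: in that lemma $s'>2$, so Morrey's inequality applies to $u\in\widetilde{W}^1_{s'}(\Omega)$ directly, whereas here $s'<2$ and one must first use the homogeneity of the equation near $y$ to promote $u$ to $W^1_{q_0}$ locally. This requires the ``lower half'' of the self-improving reverse H\"older estimate, which is not literally the statement of Lemma~\ref{200229@lem2} (that one runs from $L_2$ to $L_{q_0}$ and assumes a global solution) but comes from the same Caccioppoli--Sobolev--Gehring machinery. As an alternative one can avoid duality entirely: decompose $\Omega\setminus\overline{B_R(y)}$ into dyadic annuli (and cover the part outside $B_{R_0}(y)$ by a number of fixed-radius balls depending only on $K/R_0$), cover each annulus by boundedly many balls, and on each ball combine Lemma~\ref{210805@lem2} with the same local reverse H\"older estimate; there the hypothesis $s>2$ enters precisely to make the geometric series $\sum_j(2^jR)^{2-s}$ converge, dominated by its first term $\sim R^{2-s}$.
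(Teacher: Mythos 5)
Your overall scheme --- the reduction to $\varepsilon\le R/16$ via \eqref{200303@A1}, the duality setup with data supported in $\Omega\setminus\overline{B_R(y)}$, and the identity $\int_\Omega D_\alpha v\cdot f_\alpha\,dz+\int_\Omega\pi g\,dz=\dashint_{\Omega_\varepsilon(y)}u^k\,dz$ --- coincides with the paper's, and your exponent bookkeeping ($2/(s')^\star=2/s'-1$, $1-2/s'=-1+2/s$) is correct. Where you diverge is in how you obtain higher local integrability of $u$ near the pole, which is indeed the only new difficulty relative to Lemma \ref{210805@lem2}. You propose to jump from $L_{s'}$, $s'<2$, directly to $L_{q_0}$ on $\Omega_{R/2}(y)$ by a reverse H\"older inequality for the homogeneous problem with a sub-$L_2$ exponent on the right-hand side. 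The paper instead multiplies $(u,p)$ by a cutoff $\eta$ supported in $B_R(y)$ and applies the already-proved \emph{global} solvability estimate \eqref{210715@eq2} to the resulting problem \eqref{230208_eq3d}, whose data $F,F_\alpha,G$ involve only $Du$, $p$, $u$ against $\nabla\eta$; this bootstraps $\|Du\|_{L_q(\Omega_{R/2}(y))}$ from $\|Du\|_{L_{s'}(\Omega)}+\|p\|_{L_{s'}(\Omega)}$ with $q=s$ when $s\le 4$ and $q=2s'/(2-s')$ when $s>4$ (the case split being exactly what keeps every exponent inside the admissible window $[q_0',q_0]$), and then feeds the result into \eqref{210804@eq3a}. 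The paper's route uses nothing beyond Theorem \ref{210714@thm1} and Remark \ref{210712@rmk1}; yours is shorter and avoids the case split, but the step you lean on is not Lemma \ref{200229@lem2}: that lemma starts from $L_2$ on the right, whereas you need the version with exponent $s'\in(1,2)$, i.e.\ the downward self-improvement of reverse H\"older exponents (Caccioppoli with a sub-critical Sobolev--Poincar\'e plus the Giaquinta--Modica/Iwaniec--Nolder interpolation trick). That statement is true, and in two dimensions the Sobolev--Poincar\'e inequality of Lemma \ref{210804@lem1} does supply the needed sublinear exponent, but for the Stokes conormal problem one must also re-derive the local pressure estimate (via the divergence equation) at the level $L_{s'}$ near the Reifenberg boundary; you correctly flag this as not being a black-box citation, and without carrying it out your argument is complete only modulo that auxiliary lemma. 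Your closing remark about the dyadic-annuli alternative has the same dependence, since Lemma \ref{210805@lem2} only controls exponents below $2$.
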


\begin{proof}
Thanks to \eqref{200303@A1}, it suffices to consider the case of $\varepsilon\le R/16$.
As in the proof of Lemma \ref{210805@lem2}, we denote
$$
(v,\pi)=(G_{\varepsilon}^{\cdot k}(\cdot,y), \Pi^k_{\varepsilon}(\cdot,y)).
$$
Let $f_\alpha \in L_\infty(\Omega)^2$ and $g\in L_\infty(\Omega)$ with compact support in $\Omega\setminus \overline{B_R(y)}$.
By Theorem \ref{210714@thm1}, there exists $(u,p)\in \widetilde{W}^1_{q_0}(\Omega)^2\times L_{q_0}(\Omega)$ satisfying that 
$$
\left\{
\begin{aligned}
\operatorname{div}u=g &\quad \text{in }\, \Omega,\\
\cL^\star u+\nabla p=D_\alpha f_\alpha &\quad \text{in }\, \Omega,\\
\cB^\star u+p\nu=\nu^\alpha f_\alpha &\quad \text{on }\, \partial \Omega,
\end{aligned}
\right.
$$
and that for any $q\in [q_0', q_0]$, 
\begin{equation}		\label{230208_eq3}
\|Du\|_{L_q(\Omega)}+\|p\|_{L_q(\Omega)}\le N\|f_\alpha\|_{L_q(\Omega)}+N\|g\|_{L_q(\Omega)},
\end{equation}
where $N=N(\lambda, R_0, K, q)$.
As in \eqref{230208_eq3a},
\begin{equation}		\label{230208_eq3b}
\bigg|\int_{\Omega\setminus \overline{B_R(y)}} D_\alpha v\cdot f_\alpha\,dz+\int_{\Omega\setminus \overline{B_R(y)}} \pi g\,dz\bigg|\le \|u\|_{L_\infty(\Omega_{R/16}(y))}.
\end{equation}
Now, we consider the following two cases:
$$
s\le 4, \quad s>4.
$$
\begin{enumerate}[i.]
\item
$s\le 4$.
Note that 
$$
\frac{2s}{2+s}\le s', \quad s\le \frac{2s'}{2-s'},
$$
where $s':=s/(s-1)$.
Together with \eqref{210804@eq3a} with $q=s>2$, H\"older's inequality, and Lemma \ref{210709@lem2}, we get from \eqref{230208_eq3b} that 
\begin{align}
\nonumber
&\bigg|\int_{\Omega\setminus \overline{B_{R}(y)}} D_\alpha v\cdot f_\alpha\,dz+\int_{\Omega\setminus \overline{B_{R}(y)}} \pi g\,dz\bigg|\\
\nonumber
&\le NR^{1-2/s}\|Du\|_{L_s(\Omega_{R/2}(y))}+NR^{1-2/s'}\|u\|_{L^{2s'/(2-s')}(\Omega_{R/16}(y))}\\
\label{230208_eq3c}
&\le NR^{1-2/s}\|Du\|_{L^s(\Omega_{R/2}(y))}+NR^{1-2/s'}\|Du\|_{L^{s'}(\Omega)},
\end{align}
where $N=N(\lambda, R_0, K, s)$.
Let $\eta$ be an infinitely differentiable function in $\bR^2$ such that 
$$
0\le \eta \le 1, \quad \eta\equiv 1 \, \text{ in }\, B_{R/2}(y), \quad \operatorname{supp}\eta \subset B_R(y), \quad |\nabla \eta|\le CR^{-1}.
$$
Since $f^\alpha\equiv g\equiv 0$ in $\Omega_R(y)$, 
$(\eta u, \eta p)$ satisfies 
\begin{equation}		\label{230208_eq3d}
\begin{cases}
\operatorname{div} (\eta u)=G &\text{in }\, \Omega,\\
\cL^* (\eta u)+\nabla (\eta p)=F+D_\alpha F_\alpha &\text{in }\, \Omega,\\
\cB^* (\eta u)+(\eta p) \nu=\nu^\alpha F_\alpha & \text{on }\, \partial \Omega,\\
\end{cases}
\end{equation}
where 
$$
F=(A^{\beta \alpha})^\top D_\beta u D_\alpha \eta+p \nabla \eta, \quad F_\alpha=(A^{\beta\alpha})^\top u D_\beta \eta, \quad G=\nabla \eta \cdot u.
$$
By \eqref{210715@eq2} with $q=s$ applied to \eqref{230208_eq3d}, H\"older's inequality, and Lemma \ref{210709@lem2}, 
we obtain that 
\begin{align}
\nonumber
&\|Du\|_{L_s(\Omega_{R/2}(y))}+\|p\|_{L_s(\Omega_{R/2}(y))}\\
\nonumber
&\le N \|F\|_{L_{2s/(2+s)}(\Omega)}+N\|F_\alpha\|_{L_s(\Omega)}+N\|G\|_{L_s(\Omega)}\\
\nonumber
&\le N R^{-1} \big(\|Du\|_{L_{2s/(2+s)}(\Omega_R(y))}+\|p\|_{L_{2s/(2+s)}(\Omega_R(y))}\big)+NR^{-1}\|u\|_{L_s(\Omega_R(y))}\\
\nonumber
&\le NR^{2/s-2/s'} \big(\|Du\|_{L_{s'}(\Omega)}+\|p\|_{L_{s'}(\Omega)}\big)+N R^{2/s-2/s'} \|u\|_{L_{2s'/(2-s')}(\Omega)}\\
\label{230208_eq3e}
&\le NR^{2/s-2/s'} \big(\|Du\|_{L_{s'}(\Omega)}+\|p\|_{L_{s'}(\Omega)}\big).
\end{align}
Combining this together with \eqref{230208_eq3c}, we have 
$$
\begin{aligned}
&\bigg|\int_{\Omega\setminus \overline{B_{R}(y)}} D_\alpha v\cdot f_\alpha\,dz+\int_{\Omega\setminus \overline{B_{R}(y)}} \pi g\,dz\bigg|\\
&\le NR^{1-2/s'}\big(\|Du\|_{L_{s'}(\Omega)}+\|p\|_{L_{s'}(\Omega)}\big).
\end{aligned}
$$
Therefore by \eqref{230208_eq3} with $q=s'$ and the fact that $1-2/s'=-1+2/s$, we see that 
\begin{equation}		\label{230208_eq3f}
\begin{aligned}
&\bigg|\int_{\Omega\setminus \overline{B_{R}(y)}} D_\alpha v\cdot f_\alpha\,dz+\int_{\Omega\setminus \overline{B_{R}(y)}} \pi g\,dz\bigg|\\
&\le NR^{-1+2/s}\big(\|f_\alpha\|_{L_{s'}(\Omega\setminus \overline{B_R(y)})}+\|g\|_{L_{s'}(\Omega\setminus \overline{B_R(y)})}\big).
\end{aligned}
\end{equation}
Since the above inequality holds for all $f_\alpha \in L_\infty(\Omega)^2$ and $g\in L_\infty(\Omega)$ having compact support in $\Omega\setminus \overline{B_R(y)}$, by the duality we get the desired estimate.
\item
$s>4$.
In this case, we have 
$$
s'<\frac{2s}{2+s}, \quad \frac{2s'}{2-s'}<s.
$$
Similar to \eqref{230208_eq3c}, using \eqref{210804@eq3a} with $q=2s'/(2-s')>2$, we obtain that  
$$
\begin{aligned}
&\bigg|\int_{\Omega\setminus \overline{B_{R}(y)}} D_\alpha v\cdot f_\alpha\,dz+\int_{\Omega\setminus \overline{B_{R}(y)}} \pi g\,dz\bigg|\\
&\le N R^{2-2/s'}\|Du\|_{L_{2s'/(2-s')}(\Omega_{R/2}(y))}+ N R^{1-2/s'}\|Du\|_{L_{s'}(\Omega)},
\end{aligned}
$$
where by the same calculation used in deriving \eqref{230208_eq3e} with $s$ replaced with $2s'/(2-s')$ (in the case $s \leq 4$ above, the estimate \eqref{210715@eq2} with $q=2s'/(2-s')$ may be unavailable because $2s'/(2-s')$ may be bigger than $q_0$), we have 
$$
\|Du\|_{L_{2s'/(2-s')}(\Omega_{R/2}(y))}\le  N R^{-1}\big(\|Du\|_{L_{s'}(\Omega)}+\|p\|_{L_{s'}(\Omega)}\big).
$$
Combining these together and utilizing \eqref{230208_eq3} with $q=s'$, we derive \eqref{230208_eq3f}, which implies the desired estimate.
\end{enumerate}
The lemma is proved.
\end{proof}

Based on Lemma \ref{210804@lem3}, we have the following uniform weak $L_2$-estimate.

\begin{lemma}		\label{210719@lem1}
Let $y\in \Omega$. 
Then for any $\varepsilon\in (0, R_0]$, we have 
$$
\|DG_{\varepsilon}(\cdot,y)\|_{L_{2, \infty}(\Omega)}+\|\Pi_{\varepsilon}(\cdot,y)\|_{L_{2, \infty}(\Omega)}\le N,
$$
where $N=N(\lambda, R_0, K)$.
\end{lemma}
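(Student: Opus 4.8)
The plan is to bound the distribution function of $w:=|DG_{\varepsilon}(\cdot,y)|+|\Pi_{\varepsilon}(\cdot,y)|$ and conclude $\|w\|_{L_{2,\infty}(\Omega)}\le N$; since $|DG_{\varepsilon}(\cdot,y)|\le w$ and $|\Pi_{\varepsilon}(\cdot,y)|\le w$ pointwise, this gives the asserted estimate. I will fix once and for all an exponent $s\in(2,q_0]$, say $s=q_0$, where $q_0=q_0(\lambda)>2$ is the constant from Lemma \ref{200229@lem2}. The underlying observation is that, after the natural scaling $R\sim 1/t$, the off-diagonal $L_s$-bound of Lemma \ref{210804@lem3} with $s>2$ is precisely a weak-$(2,2)$ estimate away from the pole, while on the disk of radius $R$ about $y$ the weak-$L_2$ bound is free, being controlled by $|\Omega_R(y)|\le \pi R^2$.

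Concretely, I would argue as follows. First, for $t\ge 1/R_0$, set $R:=1/t\in(0,R_0]$ and split $\Omega=\Omega_R(y)\cup(\Omega\setminus B_R(y))$. On the first piece,
\[
\big|\{x\in\Omega_R(y):w(x)>t\}\big|\le|\Omega_R(y)|\le \pi R^{2}=\pi\,t^{-2}.
\]
On the second piece, Chebyshev's inequality together with Lemma \ref{210804@lem3} --- which is uniform in $\varepsilon\in(0,R_0]$, so no relation between $\varepsilon$ and $R$ is needed --- gives
\[
\big|\{x\in\Omega\setminus B_R(y):w(x)>t\}\big|\le t^{-s}\,\|w\|_{L_s(\Omega\setminus\overline{B_R(y)})}^{s}\le t^{-s}\big(N R^{-1+2/s}\big)^{s}=N\,t^{-s}R^{\,2-s}=N\,t^{-2},
\]
where the last equality uses $R=1/t$. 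Adding the two pieces, $t^{2}\,|\{w>t\}|\le \pi+N$.

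Next, for the remaining range $0<t<1/R_0$ I would use the trivial bound $|\{w>t\}|\le|\Omega|\le \pi K^{2}$ (recall $\operatorname{diam}\Omega\le K$), so that $t\,|\{w>t\}|^{1/2}\le R_0^{-1}\pi^{1/2}K$. Taking the supremum over all $t>0$ then yields $\|w\|_{L_{2,\infty}(\Omega)}\le N$ with $N=N(\lambda,R_0,K)$; indeed the constant produced by Lemma \ref{210804@lem3} with $s=q_0$ depends on $\lambda,R_0,K,q_0$, and $q_0=q_0(\lambda)$, so it collapses to $\lambda,R_0,K$.

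I do not expect a genuine obstacle here: the argument is essentially bookkeeping once the right decomposition is in place. The only points that must be handled with care are the choice $R=1/t$, which is what makes the powers of $R$ coming from Lemma \ref{210804@lem3} turn an $L_s$-estimate with $s>2$ into a weak-$L_2$-estimate, and the separate treatment of small $t$, which cannot be absorbed into the scaling because $s-2>0$. One could instead control the near-pole term using Lemma \ref{210805@lem2}, but the bare volume bound for $\Omega_R(y)$ already produces the optimal $t^{-2}$, so nothing is gained by doing so.
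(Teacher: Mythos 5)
Your proposal is correct and follows essentially the same route as the paper: split at $R=1/t$, use the trivial volume bound $|\Omega_R(y)|\le \pi R^2$ near the pole, and apply Chebyshev together with Lemma \ref{210804@lem3} with $s=q_0$ away from the pole, treating small $t$ by the trivial bound $|\Omega|\le \pi K^2$. The only cosmetic difference is that you bound the sum $|DG_\varepsilon|+|\Pi_\varepsilon|$ at once, whereas the paper handles the two terms separately.
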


\begin{proof}
For $y\in \Omega$ and $t>0$, set 
$$
A_t=\{x\in \Omega: |DG_{\varepsilon}(\cdot,y)|>t\}.
$$
If $t\le R_0^{-1}$, then
\begin{equation}		\label{210719@eq2}
t|A_t|^{1/2}\le R_0^{-1}|\Omega|^{1/2}\le N,
\end{equation}
where $N=N(R_0, K)$.
Otherwise, i.e,  if $t>R_0^{-1}$, 
by Lemma \ref{210804@lem3} with $R=t^{-1}<R_0$ and $s=q_0$, we have
$$
|A_t\setminus \overline{B_R(y)}|\le \frac{1}{t^{q_0}}\int_{A_t\setminus \overline{B_R(y)}} |DG_{\varepsilon}(x,y)|^{q_0}\,dx\le N t^{-2},
$$
where $N=N(\lambda, R_0, K)$.
In fact, the constant $N$ may depends on $q_0$ as well, but $q_0$ is determined by $\lambda$.
Hence, using the fact that 
$$
|A_t\cap \overline{B_R(y)}|\le N R^2=N t^{-2},
$$
we obtain
$$
t|A_t|^{1/2}\le N,
$$
which together with  \eqref{210719@eq2} yields  
$$
\|DG_{\varepsilon}(\cdot,y)\|_{L_{2, \infty}(\Omega)}\le N.
$$
Similarly, we have the estimate for $\Pi_{\varepsilon}(\cdot,y)$.
The lemma is proved.
\end{proof}

\section{Proof of Theorem \ref{M1}}	\label{S5}
Throughout the proof,  we denote by $(G_{\varepsilon}, \Pi_{\varepsilon})$ the approximated Green function constructed in Section \ref{S4}.
Fix $q_1\in (1, 2)$.
Observe that for any $y\in \Omega$ and $\varepsilon\in (0, R_0]$, 
$$
\|G_{\varepsilon}(\cdot,y)\|_{\widetilde{W}^1_{q_1}(\Omega)}+\|\Pi_{\varepsilon}(\cdot,y)\|_{L_{q_1}(\Omega)}\le N,
$$
where $N=N(\lambda, R_0, K)$, which follows from Lemmas \ref{210709@lem2} and \ref{210719@lem1}.
Indeed, we use the following to obtain the above inequality from Lemma \ref{210719@lem1}:
\[
\|u\|_{L_{q_1}(\Omega)}^{q_1} = \int_0^\infty q_1 t^{q_1-1} |\{x \in \Omega : |u(x)| > t \}| \, dt
\]
\[
\leq \int_0^1 q_1 t^{q_1-1} |\Omega| \, d t + \|u\|_{L_{2,\infty}(\Omega)}^2 \int_1^\infty q_1 t^{q_1-3} \, dt.
\]
Hence,  by the weak compactness theorem,
there exist a sequence $\{\varepsilon_\rho\}_{\rho=1}^\infty$ tending to zero and a pair 
$$
(G(\cdot,y), \Pi(\cdot,y))\in \widetilde{W}^{1}_{q_1}(\Omega)^{2\times 2}\times L_{q_1}(\Omega)^{2}
$$
such that
\begin{equation}		\label{210814@A1}
G_{\varepsilon_\rho}(\cdot,y) \rightharpoonup G(\cdot,y) \quad \text{weakly in $\widetilde{W}^{1}_{q_1}(\Omega)^{2\times 2}$},\end{equation}
$$
\Pi_{\varepsilon_\rho}(\cdot,y) \rightharpoonup \Pi(\cdot,y) \quad \text{weakly in $L_{q_1}(\Omega)^{1\times 2}$},
$$
and
\begin{equation}		\label{210809@eq3}
\|G(\cdot,y)\|_{\widetilde{W}^1_{q_1}(\Omega)}+\|\Pi(\cdot,y)\|_{L_{q_1}(\Omega)}\le N.
\end{equation}
Then the pair $(G, \Pi)$ satisfies the properties $(i)$--$(iii)$ in Definition \ref{D1} so that it is a unique Green function of $\cL$ in $\Omega$.
Indeed, the property $(i)$ follows immediately from \eqref{210809@eq3}, and the property $(ii)$ follows by taking $\rho \to \infty$ in the system \eqref{200304@eq1a} with $\varepsilon_\rho$ in place of $\varepsilon$.
To verify the property $(iii)$, we test  \eqref{171010@eq1} and \eqref{200304@eq1a} with $G_{\varepsilon_\rho}^{\cdot k}(\cdot,y)$ and $u$, respectively, to get 
$$
\dashint_{\Omega_{\varepsilon_\rho}(y)}u^k\,dx=-\int_\Omega G_{\varepsilon_\rho}^{\cdot k}(\cdot,y)\cdot f\,dx+\int_\Omega D_\alpha G_{\varepsilon_\rho}^{\cdot k}(\cdot,y)\cdot f_\alpha\,dx+\int_\Omega \Pi^k_{\varepsilon_\rho}(\cdot,y) g\,dx.
$$
This implies that
$$
u^k(y)=-\int_\Omega G^{\cdot k}(\cdot,y)\cdot f\,dx+\int_\Omega D_\alpha G^{\cdot k}(\cdot,y)\cdot f_\alpha\,dx+\int_\Omega \Pi^k(\cdot,y) g\,dx,
$$
provided that $y$ is in the Lebesgue set of $u^k$.
Hence,  $(G, \Pi)$ satisfies the property $(iii)$,  and thus it is the Green function of $\cL$ in $\Omega$.

Now we prove the estimates \eqref{230208_eq1}--\eqref{210721@eq2a} in Theorem \ref{M1}.
Let $x,y\in \Omega$ and $R\in (0, R_0]$.
It then follows from Lemma \ref{210805@lem2} and \eqref{210814@A1} that 
$$
\begin{aligned}
\bigg|\int_{\Omega_R(x)} DG(\cdot, y)\phi\,dz\bigg|
&=\lim_{\rho \to \infty} \bigg|\int_{\Omega_R(x)} DG_{\varepsilon_\rho}(\cdot, y)\phi\,dz\bigg|\\
&\le \limsup_{\rho \to \infty} \|DG_{\varepsilon_\rho}(\cdot, y)\|_{L_{q}(\Omega_R(x))}\|\phi\|_{L_{q'}(\Omega_R(x))}\\
&\le N R^{-1+2/q} \|\phi\|_{L_{q'}(\Omega_R(x))}
\end{aligned}
$$
for  all  $\phi\in L^\infty(\Omega_R(x))$, where $1\le q<2$ and $q'=q/(q-1)$.
Hence, by the duality, 
$$
\|DG(\cdot, y)\|_{L_q(\Omega_R(x))}\le N R^{-1+2/s}.
$$
Similarly we have 
$$
\|\Pi(\cdot, y)\|_{L_q(\Omega_R(x))}\le N R^{-1+2/s}.
$$
This shows \eqref{230208_eq1}.
By the same reasoning, we get  \eqref{210803@eq1} from Lemma \ref{210804@lem3}, which implies \eqref{210721@eq1a} (using \eqref{210804@eq3}), as well as  
 \eqref{210721@eq2} as in the proof of Lemma \ref{210719@lem1}.
Notice that using \eqref{210721@eq1a}, we obtain  
\begin{equation}		\label{210802@eq3}
\Bigg|G(x_0, y)-\dashint_{\Omega_{R/16}(x)} G(z,y)\,dz\Bigg|\le N_0
\end{equation}
for all $x_0, x,y\in \Omega$ and $R\in (0, R_0]$ satisfying $|x-y|\ge R$ and $x_0\in \Omega_{R/16}(x)$,
where $N_0=N_0(\lambda, R_0, K)$.

To prove  \eqref{210721@eq2a}, let $x,y\in \Omega$ with $x\neq y$, and set $|x-y|=\rho$.
We consider the following two cases:
$$
\rho\ge R_0/8, \quad \rho< R_0/8.
$$
\begin{enumerate}[i.]
\item
$\rho\ge R_0/8$.
By \eqref{210809@eq3}, we know that 
\begin{equation}		\label{210814@A2}
\|G(\cdot,y)\|_{L_1(\Omega)}\le N,
\end{equation}
where $N=N(\lambda, R_0, K)$.
Using this and  \eqref{210802@eq3} with $x_0=x$ and $R=R_0/8$, we obtain
$$
|G(x,y)|
\le \Bigg|G(x,y)-\dashint_{\Omega_{R_0/128}(x)}G(z,y)\,dz\Bigg|+\Bigg|\dashint_{\Omega_{R_0/128}(x)}G(z,y)\,dz\Bigg|\le N,
$$
which gives \eqref{210721@eq2a}.
\item
$\rho<R_0/8$.
In this case, by Theorem \ref{thm230119_1} in Appendix,  there is a point $y_0\in \Omega$ with $|y-y_0|\ge R_0/8$ satisfying the following:
there exists a chain of balls   
$B_{\rho_j}(z_j)$,  $j\in \{1, \ldots, k\}$,
where  $z_j\in \Omega$ and  $k\le N\log(R_0/\rho)$, such that 
$$
x\in B_{\rho_1}(z_1), \quad y_0\in B_{\rho_{k}}(z_k)
$$
$$
|z_j-y|\ge 16 \rho_j, \quad \rho_j\le R_0, \quad  \quad j\in \{1,\ldots,k\},
$$
and
$$
\Omega\cap B_{\rho_j}(z_j)\cap B_{\rho_{j+1}}(z_{j+1})\neq \emptyset, \quad j\in \{1,\ldots, k-1\}.
$$
where we may assume that $\rho_k\ge R_0/(8\cdot 17)$.
Set $\Theta_j=\Omega\cap B_{\rho_j}(z_j)$.
For each $j\in \{1, \ldots, k-1\}$, we choose  $\tilde{z}_j\in \Theta_j\cap \Theta_{j+1}$.
It then follows from  \eqref{210802@eq3} that 
$$
\begin{aligned}
\bigg|\dashint_{\Theta_j}G(z,y)\,dz\bigg|
&\le \bigg|\dashint_{\Theta_j}G(z,y)\,dz-G(\tilde{z}_j, y)\bigg|\\
&\quad +\bigg|\dashint_{\Theta_{j+1}}G(z,y)\,dz-G(\tilde{z}_j, y)\bigg|+\bigg|\dashint_{\Theta_{j+1}}G(z,y)\,dz\bigg|\\
&\le 2 N_0+\bigg|\dashint_{\Theta_{j+1}}G(z,y)\,dz\bigg|.
\end{aligned}
$$
By iteration,
$$
\begin{aligned}
\bigg|\dashint_{\Theta_1}G(z,y)\,dz\bigg|
&\le 2 (k-1) N_0+\bigg|\dashint_{\Theta_k} G(z,y)\,dz\bigg|
\\
&\le  2(k-1)N_0+N,
\end{aligned}
$$
where we used \eqref{210814@A2} in the second inequality.
Hence, using  \eqref{210802@eq3} again and the fact that 
$$
k\le N\log\bigg(\frac{K}{|x-y|}\bigg),
$$
we have
$$
\begin{aligned}
|G(x,y)|
&\le \bigg|G(x,y)-\dashint_{\Theta_1}G(z,y)\,dz\bigg|+\bigg|\dashint_{\Theta_1}G(z,y)\,dz\bigg|\\
&\le 2kN_0+N\\
&\le N\log\bigg(\frac{K}{|x-y|}\bigg)+N.
\end{aligned}
$$
\end{enumerate}
We have thus verified \eqref{210721@eq2a}.

Next, we prove the symmetry property \eqref{210720@eq2}.
To this end, we define the Green function $(G^\star, \Pi^\star)$ of the adjoint operator $\cL^\star$ in the same manner that $(G, \Pi)$ is defined for the operator $\cL$.
Let $x,y\in \Omega$ with $x\neq y$, and set $r=|x-y|/2$.
Observe that  $\eta G^\star(\cdot,x)$ and $(1-\eta)G^\star(\cdot,x)$ can be applied to \eqref{210815@A1} as test functions, where  $\eta$ is a smooth function in $\bR^2$ satisfying 
$$
\eta\equiv 0 \, \text{ in }\, B_{r/2}(x), \quad \eta\equiv 1 \, \text{ in }\, \bR^2\setminus B_r(x).
$$
Together with  the continuity of $G^\star(\cdot,x)$ in $\Omega\setminus \{x\}$ and the fact that 
$$
G^\star(\cdot,x)=\eta G^\star(\cdot,x)+(1-\eta)G^\star(\cdot, x),
$$
by testing the $l$th columns of $\eta G^\star(\cdot,x)$ and $(1-\eta) G^\star(\cdot, x)$ to \eqref{210815@A1}, we have  
$$
\int_\Omega A_{\alpha\beta}^{ij}D_\beta G^{jk}(\cdot,y)D_\alpha (G^\star)^{il}(\cdot,x)\,dz=(G^\star)^{kl}(y,x).
$$
Similarly we have 
$$
\int_\Omega A_{\alpha\beta}^{ij}D_\beta G^{jk}(\cdot,y)D_\alpha (G^\star)^{il}(\cdot,x)\,dz=G^{lk}(x,y).
$$
This gives the desired identity \eqref{210720@eq2}.
Finally, using the continuity of $G^\star(\cdot,x)$ and \eqref{210720@eq2}, we see that $G(x,\cdot)$ is continuous in $\Omega\setminus \{x\}$.
Hence, by the continuity of $G(\cdot,y)$ in $\Omega\setminus \{y\}$, we conclude that $G$ is continuous in $\{(x,y)\in \Omega\times \Omega: x\neq y\}$.
This completes the proof of the theorem.
\qed

\section{Appendix}	\label{S6}

Throughout this section, we let $\Omega$ be a bounded domain in $\bR^2$ with $\operatorname{diam}\Omega \leq K$ satisfying Assumption \ref{A1}.
We use abbreviations $B_R=B_R(\mathbf{0})$ and $\Omega_R=\Omega_R(\mathbf{0})$, where $\mathbf{0}=(0,0)$ is the origin in $\bR^2$.
We also denote by $\overline{xy}$ the line segment connecting $x$ and $y$.

\begin{lemma}
							\label{lem0905_1}
Let $\mathbf{0}\in \partial \Omega$, $0<R\le R_0/2$, and $z_1, z_2\in \Omega$ satisfying that 
$$
z_1=(R, 0) \quad \text{and}\quad z_2=(2R, 0)
$$
in the coordinate systems associated with $(\mathbf{0}, R)$ and $(\mathbf{0}, 2R)$, respectively.
Then, 
\[
\overline{z_1z_2} \subset \overline{\Omega_{2R}} \setminus B_R \quad \text{and} \quad R \leq |z_1 - z_2| \leq 1.001 R.
\]
\end{lemma}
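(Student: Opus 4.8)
The plan is to reduce everything to an estimate on the angle $\theta\in[0,\pi]$ between the two coordinate systems. Since the coordinate systems associated with $(\mathbf 0,R)$ and $(\mathbf 0,2R)$ are both centered at $\mathbf 0\in\partial\Omega$, letting $\mathbf e$ (resp.\ $\mathbf e'$) be the unit vector in the direction of the positive first coordinate axis of the coordinate system associated with $(\mathbf 0,R)$ (resp.\ $(\mathbf 0,2R)$), we have $z_1=R\mathbf e$ and $z_2=2R\mathbf e'$, the first coordinate of $\mathbf 0$ vanishes in each system, and the first coordinate of a point $y$ in the $(\mathbf 0,R)$-system (resp.\ $(\mathbf 0,2R)$-system) equals $y\cdot\mathbf e$ (resp.\ $y\cdot\mathbf e'$). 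Set $\cos\theta=\mathbf e\cdot\mathbf e'$. A direct computation gives $|z_1-z_2|^2=R^2(5-4\cos\theta)$ and, for $p(t)=(1-t)z_1+tz_2$ with $t\in[0,1]$,
$$
|p(t)|^2=R^2\bigl(1+t(4\cos\theta-2)+t^2(5-4\cos\theta)\bigr),\qquad p(t)\cdot\mathbf e'=R\bigl(\cos\theta+t(2-\cos\theta)\bigr).
$$
Granting that $\cos\theta$ is close to $1$, the conclusions become elementary: from $1\le 5-4\cos\theta$ we get $|z_1-z_2|\ge R$, and an upper bound on $5-4\cos\theta$ gives $|z_1-z_2|\le 1.001R$; the factor $1+t(4\cos\theta-2)+t^2(5-4\cos\theta)=1+t\bigl[(4\cos\theta-2)+t(5-4\cos\theta)\bigr]$ is $\ge1$ on $[0,1]$ as soon as $\cos\theta\ge\tfrac12$ (both coefficients in the bracket are then nonnegative), so $|p(t)|\ge R$ and hence $\overline{z_1z_2}\cap B_R=\emptyset$; and $p(t)\cdot\mathbf e'\ge R\cos\theta>2\gamma R$ together with $|p(t)|<2R$ for $t<1$ places $p(t)$ in $\Omega_{2R}$ for every $t<1$ via the left-hand inclusion in Assumption \ref{A1} at scale $2R$, while $z_2\in\Omega$ by hypothesis (the segment meets $\partial B_{2R}$ only at $z_2$), so $\overline{z_1z_2}\subset\overline{\Omega_{2R}}$.

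The substance of the proof is therefore the bound on $\theta$. First, the point $\tfrac12R\,\mathbf e$ has first coordinate $R/2>\gamma R$ and norm $R/2<R$, so by the left-hand inclusion in Assumption \ref{A1} at scale $R$ it lies in $\Omega_R\subset\Omega_{2R}$; then the right-hand inclusion at scale $2R$ forces its $\mathbf e'$-coordinate $\tfrac12R\cos\theta$ to exceed $-2\gamma R$, i.e.\ $\cos\theta>-4\gamma$. Second, write $\mathbf e'=\cos\theta\,\mathbf e+\sin\theta\,\mathbf w$ with $\mathbf w\perp\mathbf e$ a unit vector, and for small $\epsilon>0$ consider $q_\epsilon=(\gamma+\epsilon)R\,\mathbf e-\bigl(\sqrt{1-(\gamma+\epsilon)^2}-\epsilon\bigr)R\,\mathbf w$, which has first coordinate $>\gamma R$ and, for $\epsilon$ small, norm $<R$; hence $q_\epsilon\in\Omega_R\subset\Omega_{2R}$, and the right-hand inclusion at scale $2R$ gives $q_\epsilon\cdot\mathbf e'>-2\gamma R$. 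Letting $\epsilon\to0^+$ this becomes $\sqrt{1-\gamma^2}\,\sin\theta\le\gamma\cos\theta+2\gamma\le3\gamma$, so $\sin\theta\le 3\gamma/\sqrt{1-\gamma^2}$. Combining the two, $\cos^2\theta=1-\sin^2\theta\ge1-9\gamma^2/(1-\gamma^2)$, which for $\gamma\le1/96$ yields $|\cos\theta|>0.9995$; since also $\cos\theta>-4\gamma>-0.042$, in fact $\cos\theta>0.9995$. Feeding $\cos\theta\in(0.9995,1]$ into the displayed formulas completes the argument, using $5-4\cos\theta<1.002<(1.001)^2$.

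The main obstacle is the delicate numerology: the target constant $1.001$ is essentially sharp for $\gamma\le1/96$, because the estimate $\sin\theta\lesssim 3\gamma$ cannot be improved (the $\gamma$ from the left-hand inclusion at scale $R$ and the $2\gamma$ from the right-hand inclusion at scale $2R$ are both unavoidable), so the arithmetic has to be carried out with some care. The other point requiring attention is that the inclusions in Assumption \ref{A1} concern open half-planes and open balls; this is why I perturb by $\epsilon$ and pass to the limit when deriving the bound on $\sin\theta$, and why the endpoint $z_2$ of the segment must be handled via the hypothesis $z_2\in\Omega$ rather than via the flatness condition. Apart from these two issues, the argument is routine planar geometry.
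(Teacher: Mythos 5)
Your proof is correct and follows essentially the same route as the paper's: both reduce the lemma to a quantitative bound on the angle $\theta$ between the two coordinate systems, obtained by playing the inclusion of Assumption \ref{A1} at one scale against the exclusion at the other, and then finish with the elementary identity $|z_1-z_2|^2=R^2(5-4\cos\theta)$. The only (immaterial) difference is which pair of inclusions you test — you use the scale-$R$ inner inclusion against the scale-$2R$ outer one, while the paper uses $B_R\cap H_{2R}^+\subset\bR^2\setminus H_R^-$ — and both yield the same leading-order bound $\sin\theta\lesssim 3\gamma$, hence $5-4\cos\theta<1.002<(1.001)^2$.
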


\begin{proof}
In the coordinate system associated with $(\mathbf{0},R)$ (call it the first coordinate system), we have $z_1 = (R,0)$ and
\[
\{x: \gamma R < x^1\} \cap B_{R} \subset \Omega_{R} \subset \{x:- \gamma R < x^1\} \cap B_{R}.
\]
Denote
\[
H_R^+ = \{x: \in \bR^2: x^1 > \gamma R\}, \quad H_R^- = \{x: \in \bR^2: x^1 < - \gamma R\}
\]
in the first coordinate system.
Similarly, in the coordinate system associated with $(\mathbf{0}, 2R)$ (call it the second coordinate system), we have $z_2 = (2R,0)$ and
\[
\{x: \gamma (2R) < x^1\} \cap B_{2R} \subset \Omega_{2R} \subset \{x:- \gamma (2R) < x^1\} \cap B_{2R}.
\]
Denote
\[
H_{2R}^+ = \{x: \in \bR^2: x^1 > \gamma (2R)\}, \quad H_{2R}^- = \{x: \in \bR^2: x^1 < - \gamma (2 R) \}
\]
in the second coordinate system.
Note that in the second coordinate system, the coordinates of $z_1$ are not necessarily $(R,0)$.
However, we see that 
\begin{equation}
							\label{eq0906_02}
z_1 = (R\cos \theta, R \sin \theta),
\end{equation}
where $\theta$ is the angle between the first and second coordinate systems with 
\begin{equation}
							\label{eq0906_03}
|\tan\theta| \leq \frac{\gamma\left(2\sqrt{1-\gamma^2}+\sqrt{1-4\gamma^2}\right)}{\sqrt{1-\gamma^2}\sqrt{1-4\gamma^2}-2\gamma^2} < 0.001.
\end{equation}
Indeed, we must have 
\begin{equation}
							\label{eq0906_01}
B_R \cap H_{2R}^+ \subset \bR^2 \setminus H_R^-,
\end{equation}
since otherwise, that is, if there is a point $y$ belonging to $ B_R \cap H_{2R}^+\cap H_R^-$, then by Assumption \ref{A1} with the fact that  $y \in B_R \cap H_{2R}^+$, we have $y \in \Omega$, but from the fact that $y \in B_R\cap H_R^-$, we also have $y \notin \Omega$. This is a contradiction.
Note that $\partial H_R^-$ is a line whose distance from the origin is $\gamma R$.
From this, \eqref{eq0906_01}, and a direct calcuation, we obtain \eqref{eq0906_02} with \eqref{eq0906_03}.
It then follows readily that $\overline{z_1z_2} \subset \overline{\Omega_{2R}} \setminus B_R$ and
$$
\begin{aligned}
|z_1 - z_2| &\leq R\sqrt{5 - 4 \cos \theta_1} \\
&= R \sqrt{5 + 8 \gamma^2 - 4 \sqrt{1- \gamma^2} \sqrt{1-4\gamma^2}} \leq 1.001 R
\end{aligned}
$$
for $\gamma \leq 1/96$, where $\theta_1$ is the largest $\theta$ satisfying \eqref{eq0906_03}. 
\end{proof}

\begin{lemma}
							\label{lem0905_2}
Let $\mathbf{0}\in \partial \Omega$ and $y\in \Omega$ satisfying 
\[
\cR := 
\operatorname{dist}(y,\partial\Omega) = |y| < R_0/4.
\]
Then we have the following.

\begin{enumerate}[$(a)$]
\item If $0<\rho < \cR$, there exist $z_1, z_2 \in \Omega$ such that
$z_1 \in \partial B_\rho(y)$, $z_2 \in \partial B_{2\cR}$, $\overline{z_1z_2} \in \overline{\Omega_{2\cR}} \setminus B_\rho(y)$, and
\[
\cR - \rho \leq |z_1-z_2| \leq \sqrt{5}\cR.
\]
In particular, we have $z_1 = y+(\rho,0)$ and $z_2 = (2\cR,0)$ in the coordinate system associated with $(\mathbf{0}, 2\cR)$.

\item If $\cR \le \rho < R_0/4$, there exist $z_1, z_2 \in \Omega$ such that
$z_1 \in \partial B_\rho(y)$, $z_2 \in \partial B_{4\rho}$, $\overline{z_1z_2} \in \overline{\Omega_{4\rho}} \setminus B_\rho(y)$, and
\[
2\rho \leq |z_1-z_2| \leq \sqrt{17}\rho.
\]
In particular, we have $z_1 = y + (\rho,0)$ and $z_2 = (4\rho,0)$ in the coordinate system associated with $(\mathbf{0}, 4\rho)$.
\end{enumerate}
\end{lemma}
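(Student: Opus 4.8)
The plan is to handle both parts by the same piece of elementary plane geometry, working in the coordinate system associated with the relevant scale — $(\mathbf{0},2\cR)$ in part $(a)$, $(\mathbf{0},4\rho)$ in part $(b)$ — and taking $z_1,z_2$ exactly as in the ``in particular'' clauses; the content is then to verify the four listed properties (endpoint locations, $\overline{z_1z_2}\subset\overline{\Omega_{2\cR}}\setminus B_\rho(y)$ resp.\ $\overline{\Omega_{4\rho}}\setminus B_\rho(y)$, and the two-sided distance bound) by direct computation.

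For part $(a)$, I would first pass to the coordinate system associated with $(\mathbf{0},2\cR)$ — legitimate since $2\cR<R_0/2\le R_0$ — in which Assumption \ref{A1} gives
$$
\{x: x^1>2\gamma\cR\}\cap B_{2\cR}\subset \Omega_{2\cR}\subset\{x: x^1>-2\gamma\cR\}\cap B_{2\cR}.
$$
The one nontrivial preliminary is that $y$ lies in a narrow cone about the positive $x^1$-axis: since $\operatorname{dist}(y,\partial\Omega)=|y|=\cR$, the open disk $B_\cR(y)$ lies in $\Omega$, and since $|y|=\cR$ it also lies in $B_{2\cR}$; hence $B_\cR(y)\subset\Omega_{2\cR}\subset\{x^1>-2\gamma\cR\}$, so the distance from $y$ to the line $\{x^1=-2\gamma\cR\}$ is at least $\cR$, which forces $y^1\ge(1-2\gamma)\cR$ and, with $(y^1)^2+(y^2)^2=\cR^2$, pins $y$ near $(\cR,0)$. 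Now set $z_1=y+(\rho,0)\in\partial B_\rho(y)$ and $z_2=(2\cR,0)\in\partial B_{2\cR}$. Then $z_1\in B_\cR(y)\subset\Omega$ since $\rho<\cR$, and $z_2\in\overline{\Omega_{2\cR}}$ since $z_2^1=2\cR\ge2\gamma\cR$ and $|z_2|=2\cR$. Along the segment, every point $w$ has $w^1\ge\min(z_1^1,z_2^1)=y^1+\rho\ge(1-2\gamma)\cR>2\gamma\cR$ (using $\gamma\le1/96<1/4$) and $|w|\le2\cR$ with equality only at $z_2$ (convexity of $\overline{B_{2\cR}}$ together with $|z_1|<2\cR$), so $\overline{z_1z_2}\subset\overline{\Omega_{2\cR}}$; and since $z_1-y=(\rho,0)$ and $(z_2-z_1)^1=2\cR-y^1-\rho>0$, one computes $|w(t)-y|^2=\rho^2+2\rho t(z_2-z_1)^1+t^2|z_2-z_1|^2\ge\rho^2$ for $t\in[0,1]$, so $\overline{z_1z_2}\cap B_\rho(y)=\emptyset$. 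Finally, from $z_2-z_1=(2\cR-y^1-\rho,-y^2)$ with $0\le y^1\le\cR$, $0<\rho<\cR$, and $(y^2)^2=\cR^2-(y^1)^2\le\cR^2$, one gets $(\cR-\rho)^2\le|z_2-z_1|^2\le(2\cR)^2+\cR^2=5\cR^2$.

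For part $(b)$ I would argue the same way in the coordinate system associated with $(\mathbf{0},4\rho)$ (valid since $4\rho<R_0$), with $z_1=y+(\rho,0)\in\partial B_\rho(y)$ and $z_2=(4\rho,0)\in\partial B_{4\rho}$; here the narrow-cone fact is not needed. From $y\in\Omega\subset\{x^1>-4\gamma\rho\}$ and $\gamma\le1/8$ we get $z_1^1=y^1+\rho>(1-4\gamma)\rho\ge4\gamma\rho$, while $|z_1|\le|y|+\rho=\cR+\rho\le2\rho<4\rho$ (using $\cR\le\rho$), so $z_1\in\Omega_{4\rho}\subset\Omega$; and $z_2\in\overline{\Omega_{4\rho}}$ as before. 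The segment check is identical in structure: $\min(z_1^1,z_2^1)=z_1^1>4\gamma\rho$ (note $y^1+\rho\le\cR+\rho\le2\rho\le4\rho=z_2^1$), convexity of $\overline{B_{4\rho}}$ controls the disk $B_{4\rho}$, and $(z_2-z_1)^1=3\rho-y^1\ge3\rho-\cR\ge2\rho>0$ gives $\overline{z_1z_2}\cap B_\rho(y)=\emptyset$. The distance bounds come from $z_2-z_1=(3\rho-y^1,-y^2)$ with $|y^1|\le\cR\le\rho$ and $(y^2)^2\le\cR^2\le\rho^2$, yielding $(2\rho)^2\le|z_2-z_1|^2\le(4\rho)^2+\rho^2=17\rho^2$.

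The main obstacle — indeed the only step that is not mechanical — is the cone estimate $y^1\ge(1-2\gamma)\cR$ in part $(a)$: without localizing $y$ so that it points essentially along the inward normal at $\mathbf{0}$, the endpoint $z_1=y+(\rho,0)$, which for small $\rho$ sits almost on top of $y$, could fail to lie safely on the interior side $\{x^1>2\gamma\cR\}$ of the approximating line, and then the segment would leave $\Omega$. Everything else is direct; one should just keep track that the numerical constraints actually used ($\gamma\le1/4$ in part $(a)$, $\gamma\le1/8$ in part $(b)$) are comfortably implied by $\gamma\le1/96$, and that the endpoint $z_2$, lying on the sphere $\partial B_{2\cR}$ (resp.\ $\partial B_{4\rho}$), enters only through the closure $\overline{\Omega_{2\cR}}$ (resp.\ $\overline{\Omega_{4\rho}}$), which is all the segment condition requires.
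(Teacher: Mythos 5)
Your proof is correct and follows essentially the same route as the paper's: the same choice $z_1=y+(\rho,0)$, $z_2=(2\cR,0)$ (resp.\ $(4\rho,0)$), the same key step of localizing $y$ on the interior side of the Reifenberg half-plane (you obtain the slightly stronger $y^1\ge(1-2\gamma)\cR$ via the inscribed disk $B_\cR(y)$, where the paper derives $y^1>2\gamma\cR$ by contradiction with $\operatorname{dist}(y,\partial\Omega)=\cR$), and the same first-coordinate monotonicity to keep $\overline{z_1z_2}$ outside $B_\rho(y)$, with identical distance estimates.
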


\begin{proof}
We only prove the assertion $(a)$ because the proof of $(b)$ is the same with obvious modifications.
Fix a coordinate system associated with $(\mathbf{0}, 2\cR)$ satisfying 
$$
\{x: \gamma (2\cR) < x^1\} \cap B_{2\cR} \subset \Omega_{2\cR}\subset \{x:- \gamma (2\cR) < x^1\} \cap B_{2\cR}.
$$
Denote $y = (y^1,y^2)$ and observe that 
\begin{equation}
							\label{eq0905_03}
y \in \{x: \gamma (2\cR) < x^1\} \cap B_{2\cR},
\end{equation}
since otherwise, that is, if $y^1 \leq \gamma(2R)$, then
\[
\operatorname{dist}(y,\partial \Omega) \leq 2 \gamma (2R),
\]
which contradicts with the fact that $\operatorname{dist}(y,\partial \Omega) = \cR$ becasue $\gamma \leq 1/96$.
Set 
$$
z_1 = (y^1+\rho,y^2), \quad z_2=(2\cR, 0).
$$
Clearly, $z_1 \in \partial B_\rho(y)$ and $z_1 \in B_{2\cR}$.
From \eqref{eq0905_03}, which means that $y^1+\rho > \gamma(2R) + \rho$, we have
$$
z_1 \in \{x: \gamma (2\cR) < x^1\} \cap B_{2\cR} \subset \Omega_{2\cR}.
$$
Since the first coordinates of the points on the line segment $\overline{z_1z_2}$ are bigger than those of the points in $B_\rho(y)$, it follows that
\[
\overline{z_1z_2} \subset \overline{\Omega_{2\cR}} \setminus B_\rho(y).
\]
Moreover, using the facts that
\[
0 < \gamma(2\cR) + \rho \leq z_1^1 = y^1+\rho \leq \cR + \rho, \quad z_1^2 = y^2 \in [-\cR, \cR],
\]
we obtain
\[
\cR - \rho \leq |z_1 - z_2| \leq \sqrt{5} \cR.
\]
The assertion $(a)$ is proved.
\end{proof}

\begin{theorem}		\label{thm230119_1}
Let $x,y\in \Omega$ with $0<\rho:=|x-y|<R_0/8$.
Then there exists a point $y_0\in \Omega$ with $|y-y_0|\ge R_0/8$ satisfying the following:
there exists a chain of balls   
$B_{\rho_j}(z_j)$,  $j\in \{1, \ldots, k\}$,
where  $z_j\in \Omega$ and  $k\le N\log(R_0/\rho)$, such that 
$$
x\in B_{\rho_1}(z_1), \quad y_0\in B_{\rho_{k}}(z_k)
$$
$$
|z_j-y|\ge 16 \rho_j, \quad \rho_j\le R_0, \quad  \quad j\in \{1,\ldots,k\},
$$
and
$$
\Omega\cap B_{\rho_j}(z_j)\cap B_{\rho_{j+1}}(z_{j+1}) \neq \emptyset, \quad j\in \{1,\ldots, k-1\}.
$$
In the above, $N$ is a universal constant.
\end{theorem}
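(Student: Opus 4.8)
The plan is to build the chain of balls by a dyadic decreasing-then-fixed scheme, starting with small balls near $x$ and doubling their radii at each step until they reach a fixed size comparable to $R_0$, at which point we land at a point $y_0$ far from $y$. Write $\cR = \operatorname{dist}(y,\partial\Omega)$. There are two regimes depending on how $\rho = |x-y|$ compares with $\cR$, and Lemma \ref{lem0905_2} is designed precisely to supply the two consecutive balls needed to pass from a ball of radius $\rho$ around $y$ to one of larger radius, while keeping the segment connecting the centers inside $\Omega$ and away from $B_\rho(y)$. Concretely: if $\rho < \cR$, part $(a)$ of Lemma \ref{lem0905_2} (applied after translating so that the nearest boundary point of $y$ is at the origin) produces points $z_1 \in \partial B_\rho(y)$ and $z_2 = (2\cR,0)$ with $\overline{z_1z_2}\subset \overline{\Omega_{2\cR}}\setminus B_\rho(y)$; if $\cR \le \rho < R_0/4$, part $(b)$ does the analogous thing at scale $4\rho$. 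In either case we cover the segment $\overline{z_1z_2}$ by a chain of balls of comparable radius whose centers march from $z_1$ to $z_2$, each ball small enough that its center stays at distance $\ge 16\rho_j$ from $y$ — this is possible because the whole segment avoids $B_\rho(y)$ by a definite margin, so near $z_1$ we start the radii at roughly $\rho/32$ and enlarge geometrically.

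The main loop then runs as follows. Starting from $x\in B_{\rho_1}(z_1)$ with $\rho_1\approx \rho/32$, we repeatedly invoke Lemma \ref{lem0905_2} (or Lemma \ref{lem0905_1} once the relevant scale exceeds $\cR$) to double the current "anchor radius": each application appends $O(1)$ balls to the chain and multiplies the working scale by a fixed factor ($2$, or $4$ when we cross the threshold $\rho = \cR$). Because the scale grows geometrically from $\approx\rho$ up to a value comparable to $R_0$, the total number of applications is $O(\log(R_0/\rho))$, hence $k \le N\log(R_0/\rho)$ with $N$ universal. Once the anchor ball has radius comparable to $R_0/17$ and is centered at a point $z_k$ with $|z_k - y|\ge R_0/8$ (this is exactly the "$\rho_k \ge R_0/(8\cdot 17)$" normalization quoted in the proof of Theorem \ref{M1}), we pick $y_0 = z_k \in \overline{\Omega_{\cdot}}$, or a point of $\Omega$ within that last ball, so that $y_0\in B_{\rho_k}(z_k)$ and $|y-y_0|\ge R_0/8$. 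Throughout, the overlap condition $\Omega\cap B_{\rho_j}(z_j)\cap B_{\rho_{j+1}}(z_{j+1})\neq\emptyset$ is guaranteed by choosing consecutive centers on (or near) the segments $\overline{z_1z_2}$ at spacing less than $\min(\rho_j,\rho_{j+1})$ and using that these segments lie in $\overline{\Omega}$; the Reifenberg flatness in Assumption \ref{A1}, already absorbed into Lemmas \ref{lem0905_1} and \ref{lem0905_2}, ensures the segments really are inside $\overline\Omega$ even near $\partial\Omega$.

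The bookkeeping obstacle — and the part I expect to take the most care — is the transition between the two regimes and the very first few steps near $x$. One must check that the balls produced to cover the first segment $\overline{z_1z_2}$ can be taken with radii $\rho_j$ small enough that $|z_j - y|\ge 16\rho_j$ holds for every $j$, not just at the endpoints; this forces the covering to refine near the end $z_1$ (which lies on $\partial B_\rho(y)$, only distance $\rho$ from $y$), so the radii there must be of order $\rho/32$ and one needs the geometric estimate $|z_j - y| \ge |z_1 - y| - |z_j - z_1| = \rho - (\text{small})$ together with the fact that moving along $\overline{z_1z_2}$ one actually gets \emph{farther} from $y$ (the segment's first coordinate strictly exceeds that of points of $B_\rho(y)$, as used in the proofs of Lemmas \ref{lem0905_1} and \ref{lem0905_2}). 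The rest is a routine geometric-series count for $k$ and a routine verification of the overlap and $\rho_j \le R_0$ conditions; I would organize the argument as: (1) reduce to $\mathbf{0}\in\partial\Omega$ being the nearest boundary point to $y$ by translation/rotation; (2) handle the base case $\rho<\cR$ with Lemma \ref{lem0905_2}(a); (3) run the doubling loop alternating Lemmas \ref{lem0905_2}(b) and \ref{lem0905_1}, tracking the scale and the count; (4) stop when the scale is $\gtrsim R_0$ and read off $y_0$.
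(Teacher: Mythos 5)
Your proposal follows the paper's proof almost step for step: translate so that the boundary point nearest to $y$ is the origin, split into the cases $\rho<\cR$ and $\rho\ge\cR$, use Lemma \ref{lem0905_2}(a) or (b) to produce the first segment leaving $B_\rho(y)$, iterate Lemma \ref{lem0905_1} to double the scale until it is comparable to $R_0$, and cover the resulting curves by balls whose radii refine dyadically near the pole so that $|z_j-y|\ge 16\rho_j$ holds throughout, giving $k\lesssim\log(R_0/\rho)$.

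There is, however, one genuine gap that you flag but do not resolve: in the case $\rho\ge\cR$ you need to join $x$ to the starting point $z_1=w_0=y+(\rho,0)$ of the segment, and your plan implicitly does this by an arc on $\partial B_\rho(y)$ (as is legitimate when $\rho<\cR$, since then $B_\rho(y)\subset\Omega$). When $\rho\ge\cR$ the disk $B_\rho(y)$ is not contained in $\Omega$, and if $x$ lies within $O(\gamma\rho)$ of the boundary in the coordinate system at scale $3\rho$, no arc of $\partial B_\rho(y)$ from $x$ to $w_0$ need stay inside $\Omega$; without such a connection the condition $\Omega\cap B_{\rho_j}(z_j)\cap B_{\rho_{j+1}}(z_{j+1})\neq\emptyset$ can fail for the first links of the chain. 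The paper handles this by replacing $x$ with the shifted point $\bar{x}=(x^1+6\gamma\rho,x^2)$, which lies in $\Omega$ and in $\overline{B_{\rho/16}(x)}$, connecting $\bar{x}$ to $w_0$ by an arc of $\partial B_{\bar\rho}(y)$ (with $\bar\rho=|y-\bar{x}|$) followed by a short radial segment, and then simply inserting both $B_{\rho/16}(x)$ and $B_{\rho/16}(\bar{x})$ into the chain --- exploiting that the theorem only asks for nonempty triple intersections with $\Omega$, not for a continuous path inside $\Omega$ from $x$ to $\bar{x}$. Some device of this kind is needed to complete your argument; the rest of your outline matches the paper's proof and the counting, overlap, and radius conditions go through as you describe.
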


\begin{proof}
We only present here the detailed proof of the case when $B_{R_0/8}(y)\not\subseteq \Omega$ because the other case is simpler.
Take $\tilde{y}\in \partial \Omega$ such that 
$$
\cR:=\operatorname{dist}(y, \partial \Omega)=|y-\tilde{y}|<R_0/8.
$$
We may assume that $\tilde{y}=\mathbf{0}$ after translating the coordinates.
We consider the following two cases:
$$
\rho<\cR \quad \text{and}\quad \rho\ge \cR.
$$
\begin{enumerate}[(i)]
\item
$\rho<\cR$:
Set
$$
y=(y^1, y^2), \quad w_0=(y^1+\rho, y^2), \quad w_1=(2\cR, 0)
$$
in the coordinate system associated with $(\mathbf{0}, 2\cR)$.
Since $\rho < \cR$, we see that $B_\rho(y) \subset \Omega$.
Denote by $\eta_0$ an arc on $\partial B_\rho(y)$ connecting $x$ and $w_0$.
We then use Lemma \ref{lem0905_2} $(a)$ to find a line segment $\overline{w_0w_1}$ in $\overline{\Omega_{2\cR}} \setminus B_\rho(y)$ and Lemma \ref{lem0905_1} to find a line segment $\overline{w_1w_2}$ in $\overline{\Omega_{4\cR}}\setminus B_{2\cR}$, where
$$
w_2=(4\cR, 0)
$$
in the coordinate system associated with $(\mathbf{0}, 4\cR)$.
We continue until we have $w_m \in \Omega$ whose coordinates are $w_m = (2^m \cR,0)$ in the coordinate system associated with $(\mathbf{0}, 2^m \cR)$, where
$$
R_0/2 \leq 2^m\cR < R_0, \quad m \in \{3,4,\ldots\}.
$$
Denote
\[
\eta_i = \overline{w_{i-1}w_m}, \quad i\in \{1,\ldots, m\}.
\]
Note that 
$$
\ell(\eta_0)\le 2\pi \rho, \quad \cR-\rho\le \ell(\eta_1)\le \sqrt{5}\cR, 
$$
and
$$
2^{i-1}\cR\le \ell(\eta_i)\le 1.001(2^{i-1}\cR), \quad i\in \{2,\ldots, m\},
$$
where $\ell(\eta)$ means the length of a curve $\eta$.
We also note that for $z\in \eta_i$, $i\in \{0,1,\ldots, m\}$, 
$$
|z-y|=\rho \quad \text{if }\, i=0, \quad |z-y|\ge \rho \quad \text{if }\, i=1, 
$$
and 
$$
|z-y|\ge |z|-|y|\ge  (2^{i-1}-1)\cR \quad \text{if }\, i\in \{2,\ldots, m\}.
$$
We now construct the desired chain of balls along the curves $\eta_i$, $i\in \{0,1,\ldots, m\}$, as follows.
\begin{itemize}
\item
$i=0$: In this case, we cover $\eta_0$ with overlapping balls with radius $\rho/16$.
\item
$i=1$.
Let $n$ be the smallest nonnegative integer such that 
$$
\ell(\eta_1)\le 2^n \rho.
$$
Note that 
$$
2^n\le \frac{2\sqrt{5}\cR}{\rho}\le \frac{R_0}{\rho}.
$$
For $z\in \eta_1$ with $|z-w_0|<2\rho$, we cover such points with overlapping balls whose radius is $\rho/16$.
For $z\in \eta_1$ with $|z-w_0|\ge 2\rho$, we find $z_k\in \eta_1$ satisfying $|z_k-w_0|=2^k\rho$, $k\in \{1,\ldots, n-1\}$, and $z_n=w_1$.
Notice that for $z\in \overline{z_k z_{k+1}}$, 
$$
|z-y|\ge |z-w_0|-|w_0-y|=|z-w_0|-\rho\ge |z-w_0|-|z-y|,
$$
which gives 
$$
|z-y|\ge \frac{|z-w_0|}{2}\ge 2^{k-1}\rho.
$$
We then cover $\overline{z_k z_{k+1}}$ with overlapping balls whose radius is $2^{k-1}\rho/16$.
Note that the number of such balls can be bounded by $32$.
\item
$2\le i\le m$: In this case, 
 we cover $\eta_i$ with overlapping balls whose radius is 
$(2^{i-1}-1)\cR/16$.
The number of such balls can be also bounded by $32$.
\end{itemize}
\item
$\rho\ge \cR$:
Set 
$$
y=(y^1, y^2), \quad w_0=(y^1+\rho, y^2), \quad w_1=(4\rho, 0)
$$
in the coordinate system associated with $(\mathbf{0}, 4\rho)$.
Note that
\[
w_0 \in \overline{\Omega_{2\rho}}, \quad B_{(1-8\gamma)\rho}(w_0) \subset \Omega_{4\rho},
\]
and that by Lemma \ref{lem0905_2} $(b)$,  
\[
\overline{w_0 w_1} \subset \overline{\Omega_{4\rho}} \setminus B_\rho(y), \quad 2\rho\le \eta(\overline{w_0 w_1})\le \sqrt{17}\rho.
\]
As in the case $(i)$, we apply Lemma \ref{lem0905_1} with $R=2^{i-1} (4\rho)$ to find $w_i\in \Omega$, $i\in \{2,\ldots, m\}$, such that 
$|w_i|= 2^{i-1}(4\rho) = 2^{i+1}\rho$ and 
\[
2^{i}\rho \leq |w_{i} - w_{i-1}| \leq 1.001 (2^{i}\rho),
\]
where
\[
R_0/2 \leq 2^{m-1}(4\rho) = 2^{m+1}\rho < R_0, \quad m\in \{2,3,\ldots\}.
\]
Set
$$
\eta_i = \overline{w_{i-1}w_i}, \quad i\in \{1,2,\ldots,m\}.
$$
We now construct a curve $\eta_0 \subset \Omega$ connecting a point in $\overline{B_{\rho/16}(x)} \cap \Omega$ and $w_0$ as follows.
Let
$$
x=(x^1, x^2)
$$
in the coordinate system associated with $(\mathbf{0}, 3\rho)$.
In this coordinate system, we have 
\begin{equation}		\label{230120_EQ1}
w_0^1> \gamma(85 \rho)
\end{equation}
since $B_{(1-8\gamma)\rho}(w_0) \subset \Omega$. 
Hence, obviously, 
$$
w_0 \in \{z: \gamma(3\rho) < z^1\} \cap B_{3\rho}\subset \Omega.
$$
If $x^1 > \gamma(3\rho)$, then there is an arc on $\partial B_\rho(y)$ connecting $x$ and $w_0$ inside $\Omega$.
Otherwise, that is, if $x^1 \leq \gamma (3\rho)$, we set
\[
\bar{x} = (x^1+6\gamma\rho,x^2)
\]
so that (using $x^1>-\gamma(3\rho)$)
\begin{equation}
							\label{eq0907_02}
\bar{x} \in \{z: \gamma(3\rho) < z^1\} \cap B_{3\rho} \subset \Omega.
\end{equation}
Moreover, 
$$
\bar{x} \in \overline{B_{\rho/16}(x)}, \quad \rho - 6\gamma \rho \leq \bar{\rho} := |y-\bar{x}| \leq \rho + 6 \gamma \rho.
$$
Notice that 
for $z_0 \in \partial B_{\bar{\rho}}(y)$ satisfying
\[
\operatorname{dist}(w_0,\partial B_{\bar{\rho}}(y)) = |w_0 - z_0| = |\rho - \bar{\rho}| \leq 6 \gamma \rho,
\]
we have $z_0 \in \overline{B_{6\gamma \rho}(w_0)}$, from which together with \eqref{230120_EQ1} we get 
\begin{equation}
							\label{eq0907_03}
z_0 \in \{x: \gamma(3\rho) < x^1\} \cap B_{3\rho}.
\end{equation}
By \eqref{eq0907_02} and \eqref{eq0907_03}, there is an arc $\bar{\eta}_0$ on $\partial B_{\bar{\rho}}(y)$ inside $\Omega$ connecting $\bar{x}$ and $z_0$.
We then set $\eta_0$ to be the union of the arc $\bar{\eta}_0$ and the line segment from $z_0$ to $w_0$.
Notice that $|z-y|\ge \rho-6\gamma \rho$ for $z\in \eta_0$.

From now on we mean by $\eta_0$ the curve which is either the arc inside $\Omega$ from $x$ to $w_0$ on $\partial B_\rho(y)$ (when $x^1 > 3(\gamma\rho)$) or the curve inside $\Omega$ from $\bar{x}$ to $w_0$ (when $x^1 \leq 3(\gamma \rho)$).
Then by the same reasoning as in the case $(i)$, we  construct a chain of balls along the curves $\eta_i$, $i\in \{0,1,\ldots,m\}$ satisfying the required properties in the theorem.
In particular, when $\bar{x}$ is considered, we add $B_{\rho/16}(x)$ and $B_{\rho/16}(\bar{x})$ to the chain.
In this case, we do not concern whether there is a curve inside $\Omega$ connecting $x$ and $\bar{x}$.
\end{enumerate}
Clearly, in both cases, one can set $y_0 := z_k$. The theorem is proved.
\end{proof}

\bibliographystyle{plain}

\end{document}